\definecolor{darkgreen}{rgb}{.2, .6, .2}
\newcommand{\ach}[1]{{\color{darkgreen}{#1}}} 
\DeclareMathOperator{\lspan}{span}                          
\DeclareMathOperator{\conv}{conv}                           
\DeclareMathOperator{\supp}{supp}                           
\DeclareMathOperator{\rad}{rad}                             
\DeclareMathOperator{\Lip}{Lip}                             
\DeclareMathOperator{\lip}{lip}                             
\newcommand{\N}{\mathbb{N}}             
\newcommand{\M}{\mathbb{M}}             
\newcommand{\R}{\mathbb{R}}             
\newcommand{\C}{\mathbb{C}}             
\newcommand{\K}{\mathbb{K}}             
\newcommand{\set}[1]{\left\{{#1}\right\}}                   
\newcommand{\norm}[1]{\left\|{#1}\right\|}                  
\newcommand{\dual}[1]{{#1}^\ast}                            
\newcommand{\lipfree}[1]{\mathcal{F}({#1})}                 
\newcommand{\F}{\mathcal{F}}                                
\newcommand{\FS}{\mathcal{FS}}                                
\newcommand{\lipnorm}[1]{\norm{#1}_L}                       
\newcommand{\restricted}{\mathord{\upharpoonright}}
\def\<{\langle}
\def\>{\rangle}
\newcommand{\ep}{\varepsilon}
\theoremstyle{plain}
\newtheorem{theorem}{Theorem}[section]
\newtheorem{lemma}[theorem]{Lemma}
\newtheorem{corollary}[theorem]{Corollary}
\newtheorem{proposition}[theorem]{Proposition}
\newtheorem*{ThmA}{Theorem~\ref{thmA}} 
\newtheorem*{ThmB}{Theorem~\ref{thmB}} 
\newtheorem*{ThmC}{Theorem~\ref{thmC}} 
\newtheorem{maintheorem}{Theorem} 
\theoremstyle{definition}
\newtheorem*{definition*}{Definition}
\newtheorem{example}[theorem]{Example}
\theoremstyle{remark}
\newtheorem{remark}[theorem]{Remark}
\begin{document}

\title[Compact and weakly compact Lipschitz operators]{Compact and weakly compact Lipschitz operators}

\author[A. Abbar]{Arafat Abbar}

\author[C. Coine]{Cl\'ement Coine}

\author[C. Petitjean]{Colin Petitjean}

\address[A. Abbar]{LAMA, Univ Gustave Eiffel, Univ Paris Est Creteil, CNRS, F--77447, Marne-la-Vall\'ee, France}
\email{arafat.abbar@univ-eiffel.fr}

\address[C. Coine]{Normandie Univ, UNICAEN, CNRS, LMNO, 14000 Caen, France}
\email{clement.coine@unicaen.fr}

\address[C. Petitjean]{LAMA, Univ Gustave Eiffel, Univ Paris Est Creteil, CNRS, F--77447, Marne-la-Vall\'ee, France}
\email{colin.petitjean@univ-eiffel.fr}

\date{}

\begin{abstract}
Any Lipschitz map $f : M \to N$ between two pointed metric spaces may be extended in a unique way to a bounded linear
operator $\widehat{f} : \F(M) \to \F(N)$ between their corresponding Lipschitz-free spaces. In this paper, we give a necessary and sufficient condition for $\widehat{f}$ to be compact in terms of metric conditions on $f$. This extends a result by A. Jim\'{e}nez-Vargas and M. Villegas-Vallecillos
in the case of non-separable and unbounded metric spaces. After studying the behavior of weakly convergent sequences made of finitely supported elements in Lipschitz-free spaces, we also deduce that $\widehat{f}$ is compact if and only if it is weakly compact.
\end{abstract}

\subjclass[2020]{Primary 47B07, 46B50; Secondary 46B20, 54E35}


\keywords{Compact operator, Lipschitz-free space, locally flat Lipschitz function}

\maketitle

\section{Introduction}
Let $(M,d)$ be a metric space equipped with a distinguished point denoted by $0_M \in M$. We let $\Lip_0(M)$ be the Banach space of Lipschitz maps from $M$ to $\K$ ($\K = \R$ or $\C$), vanishing at $0_M$, equipped with the norm
$$\displaystyle
\mathrm{Lip}(f) :=  \sup_{x \neq y \in M} \frac{|f(x)-f(y)|}{d(x,y)}.$$ 
For $x\in M$, we denote by $\delta(x)$  the bounded linear functional on $\Lip_0(M)$ defined by $\<f,\delta(x)\> = f(x), \ f\in \Lip_0(M).$ The Lipschitz-free space over $M$, denoted by $\F(M)$, is the Banach space
$$\F(M) := \overline{ \mbox{span}}^{\| \cdot  \|}\left \{ \delta(x) \, : \, x \in M  \right \} \subset \Lip_0(M)^*.$$
We refer the reader to \cite{GoKa_2003} or \cite{Weaver2} (where they are called Arens--Eells spaces) for more information on these spaces, including a proof of the next fundamental ``linearization" property which will be the cornerstone of our study.

\begin{proposition} \label{diagramfree}
Let $M$ and $N$ be two pointed metric spaces. Let $f \colon M \to N$ be a Lipschitz map such that $f(0_M) = 0_N$. Then, there exists a unique bounded linear operator $\widehat{f} \colon \F(M) \to \F(N)$ with $\|\widehat{f}\|=\mathrm{Lip}(f)$ and such that the following diagram commutes:
$$\xymatrix{
    M \ar[r]^f \ar[d]_{\delta_{M}}  & N \ar[d]^{\delta_{N}} \\
    \F(M) \ar[r]_{\widehat{f}} & \F(N)
  }.$$
  More precisely, for every $\gamma=\sum_{i=1}^n a_i\delta(x_i)\in \F(M)$,  
 $\widehat{f}(\gamma)=\sum_{i=1}^n a_i\delta(f(x_i))$.
\end{proposition}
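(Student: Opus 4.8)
The plan is to construct $\widehat{f}$ by first defining it on the dense subspace $V := \mathrm{span}\{\delta(x) : x \in M\}$ of $\F(M)$ through the prescribed formula, then controlling its norm by means of the canonical duality between $\F(M)$ and $\Lip_0(M)$, and finally extending it by density. The engine of the whole argument is the composition map $g \mapsto g \circ f$ from $\Lip_0(N)$ to $\Lip_0(M)$, which is well defined precisely because $f(0_M) = 0_N$, and which satisfies $\Lip(g \circ f) \leq \Lip(g)\,\Lip(f)$.

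First I would define $T_0 \colon V \to \F(N)$ by $T_0\big(\sum_{i=1}^n a_i \delta(x_i)\big) = \sum_{i=1}^n a_i \delta(f(x_i))$ and verify that it is well defined and bounded in a single stroke, using that $\F(M)$ sits isometrically inside $\Lip_0(M)^*$, so that $\norm{\mu}_{\F(M)} = \sup\{\,|\langle g, \mu\rangle| : g \in \Lip_0(N),\ \Lip(g)\leq 1\,\}$, and similarly for $\F(N)$. The crucial observation is that for $g \in \Lip_0(N)$ and $\mu = \sum_i a_i \delta(x_i)$,
\[
\langle g, T_0\mu \rangle = \sum_i a_i\, g(f(x_i)) = \sum_i a_i\, (g\circ f)(x_i) = \langle g\circ f, \mu\rangle .
\]
Taking the supremum over all $g$ with $\Lip(g) \leq 1$ and using $\Lip(g\circ f) \leq \Lip(f)$ yields $\norm{T_0\mu}_{\F(N)} \leq \Lip(f)\,\norm{\mu}_{\F(M)}$. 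This single bound does double duty: it shows that $T_0$ is well defined (if $\mu = \mu'$ in $\F(M)$, then $T_0\mu$ and $T_0\mu'$ pair identically against every $g$, hence coincide) and that it is bounded with $\norm{T_0} \leq \Lip(f)$.

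I would then extend $T_0$ by continuity to the announced operator $\widehat{f} \colon \F(M) \to \F(N)$, preserving $\norm{\widehat{f}} \leq \Lip(f)$. Commutativity of the diagram is immediate since $\widehat{f}(\delta(x)) = \delta(f(x))$ by construction. For the reverse norm estimate I would invoke that $\delta_M$ and $\delta_N$ are isometric embeddings: for $x \neq y$,
\[
d_N(f(x),f(y)) = \norm{\delta(f(x)) - \delta(f(y))} = \norm{\widehat{f}(\delta(x) - \delta(y))} \leq \norm{\widehat{f}}\, d_M(x,y),
\]
and dividing by $d_M(x,y)$ and taking the supremum gives $\Lip(f) \leq \norm{\widehat{f}}$, whence equality. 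Uniqueness is forced: any bounded operator making the diagram commute must send each $\delta(x)$ to $\delta(f(x))$, hence agree with $\widehat{f}$ on the dense set $V$, and therefore everywhere.

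The only genuinely delicate ingredient is the identification of the norm on $\F(M)$ with the dual norm coming from $\Lip_0(M)$, that is, the isometric duality $\F(M)^* = \Lip_0(M)$ together with the isometric nature of $\delta_M$; these are exactly the structural facts recorded in the cited references, and they are what legitimize the duality computation above. Everything else reduces to a routine density argument, so I expect no substantive obstacle beyond invoking that duality correctly.
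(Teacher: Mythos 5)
Your proof is correct, but note that the paper itself offers no proof of Proposition~\ref{diagramfree}: it defers to \cite{GoKa_2003} and \cite{Weaver2}, where the standard argument runs through the universal extension property of $\F(M)$ --- every Lipschitz map $g\colon M\to X$ into a Banach space with $g(0_M)=0$ admits a unique bounded linear extension $\overline{g}\colon\F(M)\to X$ with $\|\overline{g}\|=\Lip(g)$ --- applied to $\delta_N\circ f\colon M\to\F(N)$, whose Lipschitz constant equals $\Lip(f)$ because $\delta_N$ is isometric. Your route is genuinely different and equally valid: you construct $\widehat{f}$ by hand on $\lspan\{\delta(x):x\in M\}$ and control everything through the single pairing identity $\langle g,T_0\mu\rangle=\langle g\circ f,\mu\rangle$; in effect you are building $\widehat{f}$ as the pre-adjoint of the composition operator $C_f$, which dovetails with the paper's own observation in the introduction that $\big(\widehat{f}\,\big)^*=C_f$. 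Your ``double duty'' use of that identity is sound: the right-hand side depends only on $\mu$ as a functional on $\Lip_0(M)$, and since $\F(N)\subset\Lip_0(N)^*$ the functions $g\in\Lip_0(N)$ separate points of $\F(N)$, so well-definedness follows; combined with $\Lip(g\circ f)\le\Lip(g)\Lip(f)$ it gives $\|T_0\|\le\Lip(f)$, and your reverse estimate via the isometry of $\delta$ correctly yields $\|\widehat{f}\|=\Lip(f)$, with uniqueness forced on the dense span. What the universal-property proof buys is a reusable factorization principle (linearization of arbitrary Banach-space-valued Lipschitz maps); what yours buys is self-containedness modulo exactly two structural facts, the isometric identification $\F(M)^*\equiv\Lip_0(M)$ (here essentially definitional, since $\F(M)$ carries the norm inherited from $\Lip_0(M)^*$) and the isometry of $\delta$ (one line via the $1$-Lipschitz witness $u\mapsto d(u,y)-d(0,y)$). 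One typo to fix: in your displayed dual-norm formula for $\|\mu\|_{\F(M)}$ the supremum should run over $g\in\Lip_0(M)$ with $\Lip(g)\le 1$, not over $g\in\Lip_0(N)$.
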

In this paper, operators of the kind $\widehat{f} \colon \F(M) \to \F(N)$ will be called Lipschitz operators. 
The above linearization property carries some metric information about $f$ and the metric spaces $M,N$ themselves. Of course, passing from a Lipschitz map to a linear map has a price and the difficulty is to analyse the structure of the associated Lipschitz-free spaces. A very natural yet widely unexplored topic consists in the study of how metric properties of $f$ are transferred to linear properties of $\widehat{f}$, and vice-versa (see e.g. \cite{ACP20}). 
\smallskip

In this paper, we investigate the compactness properties of $\widehat{f}$ and characterize them in terms of metric conditions on $f$. Recall that an operator $T : X \to Y$ between Banach spaces is compact if the image by $T$ of the unit ball of $X$, denoted by $B_X$, is relatively compact in $Y$. Similarly, we say that $T$ is weakly compact if $T(B_X)$ is relatively weakly compact in $Y$. It is obvious that any compact operator is also weakly compact, while the converse is not true in general. 
A disguised study of compact Lipschitz operators has probably been initiated by Kamowitz and Scheinberg in \cite{Kamo} and then pursued by Jim\'{e}nez-Vargas and Villegas-Vallecillos in \cite{Vargas1} (see also \cite{JSV14} where vector-valued Lipschitz functions are considered). Indeed, in the last mentioned papers, the authors consider composition operators on Lipschitz spaces which appear naturally as the adjoints of our Lipschitz operators $\widehat{f}$. To be more specific, noting that
$$f\in \Lip_0(M) \mapsto \left[\sum_i a_i \delta(x_i) \mapsto \sum_i a_if(x_i) \right] \in \F(M)^*$$
is an isometric isomorphism, we get that $\big(\widehat{f}\,\big)^\ast=C_f$, where $C_f : \Lip_0(M) \to \Lip_0(N)$ is the composition operator given by $C_f(g) = g \circ f, \ g\in \Lip_0(M)$. Of course, by Schauder's theorem, $\widehat{f}$ is compact if and only if $\big(\widehat{f}\,\big)^*$ is compact, so one can tackle the problem either working with $C_f$ or working with $\widehat{f}$. In \cite{Vargas1}, the authors proved the next characterization. 
\smallskip

\noindent\textbf{Theorem (\cite[Theorem 1.2]{Vargas1}).}
\textit{
Let $M$ be pointed separable metric spaces and let $f: M \to M$ be a Lipschitz map vanishing at $0_M$. Assume that $M$ is bounded and separable. Then the composition operator $C_f : g \in \Lip_0(M) \mapsto g \circ f \in \Lip_0(M)$ is compact if and only if
\begin{enumerate}
	\item[(i)] $f(M)$ is totally bounded in $M$.  
	\item[(ii)] $f$ is uniformly locally flat, that is, for each
	$\ep > 0$, there exists $\delta > 0$ such that $d(f(x), f(y)) \leq \ep d(x,y)$ whenever $d ( x , y ) \leq  \delta$. 
\end{enumerate}
}
A few comments about the above statement are necessary. First, as it is proved in \cite[Theorem~8.7.8]{LipschitzBook}, the very same result holds for Lipschitz maps $f: M \to N$ where $N$ is any pointed metric space. Notice also that the separable assumption is absent in \cite[Theorem 1.2]{Vargas1}, but, as is this written in \cite{LipschitzBook}, the method of the proof needs $M$ to be separable. Finally the above condition $(ii)$ is called ``supercontractive'' in \cite{Vargas1}, but we also sometimes see it as the ``the little Lipschitz condition'' (since the space of uniformly locally flat Lipschitz functions is often called the little Lipschitz space, see \cite{Weaver2}).
\smallskip

Our first main result extends the previous theorem in the case of any metric spaces $M$ and $N$ (in particular not separable and unbounded). 
In fact, when $M$ is unbounded, one needs an additional assumption to take into account the behavior of the function $f$ at infinity. To prove our result, we are dealing directly with $\widehat{f}$ instead of its adjoint $C_f$.
Hence, even when $M$ is bounded, our proof is different from that of \cite{Vargas1}.

\begin{ThmA} 
Let $M,N$ be complete pointed metric spaces, and let $f : M \to N$ be a base point-preserving Lipschitz mapping. Then $\widehat{f} : \F(M) \to \F(N)$ is compact if and only if the next assertions are satisfied:
\begin{enumerate}
	\item[$(P_1)$]  For every bounded subset $S \subset M$, $f(S)$ is totally bounded in $N$;
	\item[$(P_2)$] $f$ is uniformly locally flat, that is, 
	$$ \lim\limits_{d(x,y) \to 0} \dfrac{d(f(x),f(y))}{d(x,y)} =0;$$
	\item[$(P_3)$] For every $(x_n,y_n)_n \subset \widetilde{M} : = \{(x,y) \in M \times M \; | \; x \neq y\}$ such that \\
	$\lim\limits_{n \to \infty} d(x_n,0) = \lim\limits_{n \to \infty} d(y_n,0) = \infty$, either 
	\smallskip
	
\begin{itemize}
	\item $(f(x_n), f(y_n))_n$ has an accumulation point in $N \times N$, or
	\item $\underset{n\to+\infty}{\liminf}\,\dfrac{d(f(x_n),f(y_n))}{d(x_n,y_n)}=0$.
\end{itemize}
\end{enumerate}
\end{ThmA}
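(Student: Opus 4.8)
The plan is to reduce the compactness of $\widehat f$ to a relative compactness statement about the images of the elementary molecules, and then to translate that statement into the three metric conditions through a case analysis on the location of the pairs $(x_n,y_n)$. Writing $m_{xy}=\frac{\delta(x)-\delta(y)}{d(x,y)}$ for $(x,y)\in\widetilde M$, recall that $B_{\F(M)}=\overline{\conv}\{\pm m_{xy}\}$ (De Leeuw/Weaver). Since $\widehat f$ is linear and continuous and $\widehat f(m_{xy})=t_{xy}\,m_{f(x)f(y)}$ with $t_{xy}=\frac{d(f(x),f(y))}{d(x,y)}$ (read as $0$ when $f(x)=f(y)$), we have $\widehat f(B_{\F(M)})\subseteq\overline{\conv}\{\pm\widehat f(m_{xy})\}$. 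By Mazur's theorem the closed convex hull of a relatively compact set is relatively compact, so $\widehat f$ is compact if and only if $K:=\{\widehat f(m_{xy}):(x,y)\in\widetilde M\}$ is relatively compact in $\F(N)$; as $\F(N)$ is complete this means exactly that every sequence $\mu_n=\widehat f(m_{x_ny_n})$ admits a norm-convergent subsequence. This single criterion drives both implications.

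\textbf{Sufficiency.} Assume $(P_1)$--$(P_3)$ and fix a sequence $\mu_n=\widehat f(m_{x_ny_n})$; set $a_n=d(x_n,0)$, $b_n=d(y_n,0)$, $d_n=d(x_n,y_n)$ and $t_n=\|\mu_n\|=\frac{d(f(x_n),f(y_n))}{d_n}$. If $\liminf d_n=0$ then $(P_2)$ forces $t_n\to0$ along a subsequence, so $\mu_n\to0$; thus I may assume $\inf_n d_n>0$. If $(a_n)$ and $(b_n)$ are both bounded, then $(P_1)$ gives (after extraction) $f(x_n)\to u$, $f(y_n)\to v$ and $d_n\to d_\infty>0$, whence $\mu_n\to\frac{\delta(u)-\delta(v)}{d_\infty}$. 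If both $a_n,b_n\to\infty$, then $(P_3)$ applies directly: in its first alternative $\delta(f(x_n))$ and $\delta(f(y_n))$ converge and $\mu_n$ converges as well, to $0$ or to a molecule according as $d_n\to\infty$ or $d_n$ stays bounded, while in the second alternative $t_n\to0$ and $\mu_n\to0$. The only genuinely delicate case is when exactly one point escapes, say $a_n$ bounded and $b_n\to\infty$ (so $d_n\to\infty$ and $f(x_n)\to u$ by $(P_1)$). If $\liminf d(f(y_n),0)<\infty$ then $\|\mu_n\|\le\frac{d(f(y_n),0)+d(f(x_n),0)}{d_n}\to0$ along a subsequence; the remaining possibility is $d(f(y_n),0)\to\infty$. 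Here I would argue that $\liminf t_n=0$ regardless: if not, passing to a subsequence with $t_n\ge c>0$ and with $d(f(y_n),0)$ growing geometrically, the reverse triangle inequality gives $d(f(y_n),f(y_m))\ge\tfrac12 d(f(y_n),0)\gtrsim c\,b_n$ for $n>m$, while $d(y_n,y_m)\le b_n+b_m\lesssim b_n$; feeding the pairs $(y_{2k},y_{2k-1})$ (both escaping, with images having no accumulation point) into $(P_3)$ contradicts it. Hence $t_n\to0$ on a subsequence and $\mu_n\to0$, finishing sufficiency.

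\textbf{Necessity.} Conversely assume $\widehat f$ compact, that is, $K$ relatively compact. Condition $(P_1)$ is immediate: $\delta_M(S)$ is bounded, so $\delta_N(f(S))=\widehat f(\delta_M(S))$ is relatively compact, and since $\delta_N$ is an isometry $f(S)$ is totally bounded. For $(P_2)$, if $f$ were not uniformly locally flat there would be pairs with $d(x_n,y_n)\to0$ and $t_n\ge\eta>0$; then $\mu_n=t_n\,m_{f(x_n)f(y_n)}$ is a sequence of (scaled) \emph{shrinking} molecules, since $d(f(x_n),f(y_n))\le\Lip(f)\,d(x_n,y_n)\to0$, and such a sequence has no convergent subsequence, contradicting relative compactness of $K$. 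For $(P_3)$, a failure provides pairs with $a_n,b_n\to\infty$, no accumulation point of $(f(x_n),f(y_n))$, and $t_n\ge c>0$; after extraction one image coordinate, say $f(y_n)$, has no convergent subsequence, hence (being non-totally-bounded) a $\rho$-separated subsequence, and I would combine this with truncated-distance Lipschitz potentials to extract a separated subsequence of $\mu_n$, again contradicting relative compactness.

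The main obstacle is the norm control in $\F(N)$ underlying the necessity of $(P_2)$ and $(P_3)$: one must show that the relevant image molecules $m_{u_nv_n}$ (shrinking in the case of $(P_2)$; long and escaping in the case of $(P_3)$) form a \emph{non}-totally-bounded set, i.e. establish lower bounds $\|\mu_n-\mu_m\|\ge\varepsilon$. Since the norm of $\F(N)$ is computed by optimizing over $1$-Lipschitz potentials, the difficulty is to select potentials that simultaneously ``see'' one molecule and remain flat on the others; this is precisely the analysis of weakly (and weak$^\ast$) convergent finitely supported sequences announced in the abstract, and it is where completeness of $N$ and the separation extracted above enter.
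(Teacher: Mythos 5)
Your overall architecture matches the paper's: you reduce compactness of $\widehat f$ to relative compactness of the image molecules (this is exactly Proposition~\ref{caracCompact}, from \cite{Vargas2}) and then run a three-case analysis on where $(x_n,y_n)$ sits. Your sufficiency direction is essentially complete and correct, and in the mixed case ($a_n$ bounded, $b_n\to\infty$, $d(f(y_n),0)\to\infty$) you take a genuinely different route: the paper first proves that $(P_3)$ implies radial flatness (Lemma~\ref{3implies5}, via an inductive construction of auxiliary points) and then splits $m_n$ through $\delta(0)$, whereas you derive a contradiction directly by feeding the pairs $(y_{2k},y_{2k-1})$ into $(P_3)$; after the routine extractions (monotone $b_n$, geometric growth of $d(f(y_n),0)$) your estimate $\liminf d(f(y_{2k}),f(y_{2k-1}))/d(y_{2k},y_{2k-1})\geq c/4>0$ together with the escape of $f(y_{2k})$ does contradict both alternatives of $(P_3)$, so this works and is arguably slicker. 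Your necessity of $(P_1)$ is also correct and more direct than the paper's (which normalizes $\delta(x_n)/d(x_n,0)$ and goes through Lemma~\ref{Prep2maintheorem}): a compact operator maps the bounded set $\delta_M(S)$ to a relatively compact set, and $\delta_N$ is isometric.

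The genuine gap is in the necessity of $(P_2)$ and $(P_3)$, and you flag it yourself in your closing paragraph: you assert, without proof, that shrinking molecules with $\|\mu_n\|\geq\eta$ admit no convergent subsequence, and that in the $(P_3)$ case one can ``extract a separated subsequence of $\mu_n$'' using truncated-distance potentials. The first claim is true but is precisely the content of the paper's Lemma~\ref{Prep2maintheorem}(1), whose proof needs that $\mathcal{FS}_2(N)$ is weakly (in particular norm) closed \cite[Lemma~2.10]{ACP20} together with the support-approach Lemma~\ref{Prepmaintheorem} and the bump functions of Lemma~\ref{LemmaConstrctionLipF}; none of this is one line. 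More seriously, your sketched route for $(P_3)$ via pairwise norm separation would fail in the regime $d(x_n,y_n)\to\infty$: there $\mu_n=d(x_n,y_n)^{-1}\bigl(\delta(f(x_n))-\delta(f(y_n))\bigr)$ carries coefficients $1/d(x_n,y_n)\to 0$ on each support point, so a bounded-support potential that ``sees'' $f(y_n)$ and is flat elsewhere only yields $\langle h,\mu_n-\mu_m\rangle=O(1/d(x_n,y_n))\to 0$ — separation of the supports does not give separation of the molecules, even though $\|\mu_n\|=t_n\geq c$. The correct obstruction in that regime is not pairwise separation but the statement that any weak (a fortiori norm) limit of such a sequence must vanish, contradicting $\|\mu_n\|\geq c$; this is Lemma~\ref{Prep2maintheorem}(2)--(3) in the paper, proved by splitting off $\delta(f(x_n))/d(x_n,y_n)$ as a norm-null piece and analyzing the remaining $\mathcal{FS}_1$-sequence. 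So your proposal is sound in outline and in the forward direction, but the necessity of $(P_2)$ and $(P_3)$ rests on exactly the finitely-supported-sequence analysis you defer, and the particular shortcut you propose for it would not go through as stated.
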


It turns out that in the proof of ``$\implies$'' in Theorem~\ref{thmA}, which will be provided in Section~\ref{section2}, most of the time we only use the weaker assumption that $\widehat{f}$ is weakly compact. This suggests that there should be a close relationship between compact Lipschitz operators and weakly compact Lipschitz operators. Another clue is contained in \cite{JimenezWCompact}. Let us denote $\lip_0(M)$ the subspace of $\Lip_0(M)$ made of uniformly locally flat functions. Then we say that $\lip_0(M)$ separates the points (of $M$) uniformly if there exists $C >0$ such that, for every $x \neq y$, there exists a C-Lipschitz map $f \in \lip_0(M)$ with $|f(x) - f(y)| = d(x,y)$. Now 
\cite[Corllary~2.4]{JimenezWCompact} states that if $M$ is a compact metric space such that $\lip_0(M)$ separates the points uniformly, then the composition operator $C_f : g \in \Lip_0(M) \mapsto g \circ f \in \Lip_0(M)$ is weakly compact if and only if it is compact. Let us point out that for a compact metric space $M$, $\lip_0(M)$ separates points uniformly if and only if $M$ is purely 1-unrectifiable (that is, does not contain any bi-Lipschitz image of a subset of $\R$ with positive Lebesgue measure; see \cite[Theorem~A]{AGPP21}). This recent characterization underlines the fact that the assumptions in \cite[Corllary~2.4]{JimenezWCompact} are rather restrictive. We shall prove in Section~\ref{section3} that this result is actually true for every metric space $M$.

\begin{ThmB} 
	Let $M,N$ be complete pointed metric spaces, and let $f : M \to N$ be a base point-preserving Lipschitz mapping. The the next conditions are equivalent
	\begin{enumerate}
		\item $\widehat{f} : \F(M) \to \F(N)$ is compact;
		\item $\widehat{f} : \F(M) \to \F(N)$ is weakly compact;
		\item $C_f :  \Lip_0(N) \to \Lip_0(M)$ is compact;
		\item $C_f :  \Lip_0(N) \to \Lip_0(M)$ is weakly compact;
		\item $ C_f : \Lip_0(N) \to \Lip_0(M) \ \text{is weak}^*\text{-to-weak continuous}$.
	\end{enumerate}
\end{ThmB}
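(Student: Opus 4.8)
The plan is to prove the two substantial equivalences hidden among the ``easy'' ones and then pour all the effort into the single implication $(2)\Rightarrow(1)$. Since $C_f=\big(\widehat{f}\,\big)^\ast$ by the discussion following Proposition~\ref{diagramfree}, Schauder's theorem yields $(1)\Leftrightarrow(3)$ and Gantmacher's theorem yields $(2)\Leftrightarrow(4)$. For $(2)\Leftrightarrow(5)$ I would invoke the classical duality fact that a bounded operator $T\colon X\to Y$ is weakly compact if and only if its adjoint $T^\ast\colon Y^\ast\to X^\ast$ is weak$^\ast$-to-weak continuous: one direction is Gantmacher's characterization $T^{\ast\ast}(X^{\ast\ast})\subseteq Y$, and the other follows because a weak$^\ast$-continuous linear functional on $Y^\ast$ must lie in $Y$. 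Applied to $T=\widehat{f}$ and $T^\ast=C_f$ this is exactly $(2)\Leftrightarrow(5)$. As $(1)\Rightarrow(2)$ is trivial, everything reduces to showing that weak compactness of $\widehat{f}$ forces compactness.

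To attack $(2)\Rightarrow(1)$, I would first reduce to molecules. Writing $m_{xy}=\tfrac{\delta(x)-\delta(y)}{d(x,y)}$, one has $B_{\F(M)}=\overline{\mathrm{conv}}\{\pm m_{xy}:(x,y)\in\widetilde{M}\}$; since $\widehat{f}$ is linear and bounded, Mazur's theorem (for the norm topology) and the Krein--\v{S}mulian theorem (for the weak topology) show that $\widehat{f}$ is compact, respectively weakly compact, if and only if the set $K:=\{\widehat{f}(m_{xy}):(x,y)\in\widetilde{M}\}$ is relatively norm compact, respectively relatively weakly compact. By Proposition~\ref{diagramfree} each element of $K$ is the finitely supported vector $\widehat{f}(m_{xy})=\tfrac{d(f(x),f(y))}{d(x,y)}\,m_{f(x)f(y)}$. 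Thus $(2)\Rightarrow(1)$ amounts to the statement: \emph{if $K$ is relatively weakly compact, then it is relatively norm compact}. (Alternatively, one may instead verify that weak compactness already produces the three metric conditions $(P_1),(P_2),(P_3)$ of Theorem~\ref{thmA}, since the forward proof in Section~\ref{section2} uses only weak compactness apart from the places covered by the lemma below.)

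The engine is the following lemma on weakly convergent finitely supported vectors, which is the content of Section~\ref{section3}: \emph{if $(\gamma_n)$ is a sequence of finitely supported elements of $\F(N)$ with $\gamma_n\wconv 0$ and $\inf_n\|\gamma_n\|>0$, then some subsequence of $(\gamma_n)$ is equivalent to the unit vector basis of $\ell_1$.} Granting it, suppose $K$ is relatively weakly compact and take any sequence $(a_n)\subseteq K$; by the Eberlein--\v{S}mulian theorem a subsequence satisfies $a_n\wconv w$ for some $w\in\F(N)$. If $(a_n)$ were not norm relatively compact, then after a further extraction we may assume $\|a_n-a_m\|\ge\eta>0$ for all $n\ne m$; the consecutive differences $a_n-a_{n+1}$ are then finitely supported, satisfy $a_n-a_{n+1}\wconv 0$, and have norm $\ge\eta$, so by the lemma some subsequence of them is $\ell_1$-equivalent. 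But an $\ell_1$-basic sequence is not weakly null, a contradiction. Hence every sequence in $K$ has a norm convergent subsequence, $K$ is relatively norm compact, and $\widehat{f}$ is compact.

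The whole difficulty is concentrated in the lemma, which I expect to be the main obstacle. I would prove it by a gliding-hump/subsequence argument: weak nullity forces the ``mass'' of the $\gamma_n$ to disperse --- escaping to infinity in $N$, collapsing onto arbitrarily small scales, or splitting across metrically distant points --- so that, after passing to a subsequence, the supports of the $\gamma_n$ can be separated in a quantitative metric sense. The upper estimate $\big\|\sum_k c_k\gamma_{n_k}\big\|\le\sum_k|c_k|\,\|\gamma_{n_k}\|$ is automatic; the crux is the matching lower estimate $\big\|\sum_k c_k\gamma_{n_k}\big\|\gtrsim\sum_k|c_k|$, which I would obtain by pasting together localized Lipschitz functions, each adapted to a single $\gamma_{n_k}$ and reproducing the sign of $c_k$, while controlling their mutual interference so that the glued test function keeps a uniformly bounded Lipschitz constant. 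Making the separation and the gluing work simultaneously across all three dispersion regimes is the delicate point, and it is precisely here that the finite support of the $\gamma_n$ is essential, since general weakly null sequences in $\F(N)$ need not contain $\ell_1$-subsequences.
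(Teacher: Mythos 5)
Your skeleton is sound and matches the paper's: the equivalences $(1)\Leftrightarrow(3)$, $(2)\Leftrightarrow(4)$, $(2)\Leftrightarrow(5)$ are dispatched exactly as in the paper (Schauder, Gantmacher, Gantmacher--Nakamura), the reduction to the molecule set $K$ is precisely Proposition~\ref{caracCompact}, and deriving $(2)\Rightarrow(1)$ from a Schur-type statement for finitely supported elements is also the paper's route (Theorem~\ref{thmC}). The genuine gap is in your key lemma. As stated --- for arbitrary finitely supported $\gamma_n$, with no bound on the size of the supports --- it is \emph{false}: identify $\F([0,1])$ with $L^1(0,1)$ via $\delta(x)\mapsto\chi_{[0,x]}$; finitely supported elements then correspond exactly to step functions, so the Rademacher functions $r_n(t)=\operatorname{sign}\sin(2^n\pi t)$ arise from finitely supported, normalized, weakly null elements $\gamma_n$, yet by Khintchine's inequality every subsequence of $(r_n)$ in $L^1$ is equivalent to the unit vector basis of $\ell_2$, so no subsequence is $\ell_1$-equivalent. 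What fails is that $|\supp(\gamma_n)|\approx 2^n$ grows: the lemma requires $\gamma_n\in\mathcal{FS}_k(N)$ for a \emph{fixed} $k$. Your application survives this correction (your set $K$ lies in $\mathcal{FS}_2(N)$ and the consecutive differences in $\mathcal{FS}_4(N)$), and with the uniform bound the corrected statement is exactly the Albiac--Kalton dichotomy \cite[Theorem~5.2]{AlbiacKalton} --- which is precisely the ``deep result'' the paper cites as an alternative source for Theorem~\ref{thmC}.

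The deeper problem is that your sketch of the lemma's proof would fail in the one regime where the difficulty actually lives. If all support points of all the $\gamma_n$ converge to a single point $x$ at different scales, then $d(\supp(\gamma_n),\supp(\gamma_m))\to 0$ and no ``quantitative metric separation'' of supports is available, so pasting bump functions localized on disjoint regions cannot produce the $\ell_1$ lower bound; interference must be controlled in \emph{scale} rather than in distance, and this is the genuinely hard part (in essence the Schur property of free spaces over small compacta). The paper's proof of Theorem~\ref{thmC} makes this split explicit: the metrically separated regime is handled by Kalton's lemma (Lemma~\ref{lemmaKalton}, from \cite[Lemma~4.5]{Kalton04}), while the collapsing regime is handled by induction on $k$, using the multiplication operators $W_h$ of Lemma~\ref{lm:multiplication_operator} to peel off the coordinates accumulating at $x$ and the Schur property of $\F(K)$ for countable compact $K$ \cite[Theorem~3.1]{HLP}, after reducing to bounded $M$ via \cite{AACD20}. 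Your outline covers only the first regime. Note also that your parenthetical alternative (checking $(P_1)$--$(P_3)$ of Theorem~\ref{thmA} from weak compactness) does not avoid this machinery: several steps there need the weak-to-norm upgrade too, e.g.\ concluding $\|m_{n_k}\|\to 0$ from weak convergence to $0$, which weak convergence alone (being only norm lower semicontinuous) does not give. Without an argument replacing the localization/Schur step, the central implication $(2)\Rightarrow(1)$ remains unproved.
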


The key ingredient for proving Theorem~\ref{thmB} will be a structural result concerning weakly convergent sequences of finitely supported elements in Lipschitz-free spaces. We recall that $\gamma \in \F(M)$ is said to be finitely supported if $\gamma \in \lspan \left \{ \delta(x) \, : \, x \in M  \right \} $ and then the support of $\gamma$, denoted by $\supp (\gamma)$, is the smallest subset $S \subset M$ such that $\gamma \in \F(S)$. In what follows, for every $k \in \N$, $\mathcal{FS}_k(M)$ stands for the set of all $\gamma \in \F(M)$ such that $\supp(\gamma)$ contains at most $k$ points of $M$.

\begin{ThmC}
	Let $M$ be a complete metric space. If a sequence $(\gamma_n)_n \subset \mathcal{FS}_k(M)$ weakly converges to some $\gamma \in \F(M)$, then $\gamma \in \mathcal{FS}_k(M)$ and $(\gamma_n)_n$ actually converges to $\gamma$ in the norm topology. 
\end{ThmC}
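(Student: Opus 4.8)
The plan is to establish both conclusions at once through a single subsequence argument. Since a weak limit is unique and norm convergence implies weak convergence, it suffices to prove that \emph{every} subsequence of $(\gamma_n)_n$ admits a further subsequence converging in norm to some element of $\FS_k(M)$: any such norm-limit must then coincide with $\gamma$, which forces $\gamma \in \FS_k(M)$, and the standard subsequence principle upgrades this to norm convergence of the whole sequence (if some subsequence stayed $\ep$-away from $\gamma$, it would still have a further subsequence converging to $\gamma$, a contradiction). As a preliminary, weak convergence gives $\sup_n \norm{\gamma_n} =: R < \infty$.

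Fix a subsequence. Discarding null coefficients and the base point, I would write $\gamma_n = \sum_{i=1}^{\ell_n} a_i^{(n)} \delta(x_i^{(n)})$ with the $x_i^{(n)}$ distinct, different from $0_M$, and $a_i^{(n)} \neq 0$; after a further subsequence I may assume $\ell_n \equiv \ell \le k$ and that every pairwise distance $d(x_i^{(n)}, x_j^{(n)})$ and every $d(x_i^{(n)}, 0_M)$ converges in $[0,+\infty]$. This produces a partition of $\{1,\dots,\ell\}$ into clusters via $i \sim j \iff d(x_i^{(n)}, x_j^{(n)}) \to 0$ (an equivalence relation by the triangle inequality, stable along the subsequence), and a dichotomy between bounded clusters and escaping ones (where $d(x_i^{(n)},0_M) \to \infty$). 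Because distinct clusters stay a definite distance apart, for each cluster I would choose $1$-Lipschitz localising functions that are essentially constant on the region it occupies and vanish near the others; testing weak convergence against these lets me treat each cluster in isolation, so the problem reduces to single clusters.

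The heart of the argument, and the step I expect to be the main obstacle, is to exclude the two genuinely degenerate regimes using the full strength of weak convergence rather than mere boundedness (which, absent local compactness of $M$, carries no subsequential information). Escaping clusters would be tamed by testing against the truncated distances $\min(d(\cdot,0_M),r)$: for large $n$ the escaping points lie beyond radius $r$, so these pairings register no escaping contribution, which together with the bound on $\norm{\gamma_n}$ prevents any escaping part from surviving in the localised limit. The delicate case is a collapsing cluster with unbounded coefficients, i.e. the formation of a dipole (or higher multipole) such as $a_n\big(\delta(x_n)-\delta(y_n)\big)$ with $x_n,y_n \to p$, $d(x_n,y_n)\to 0$ and $a_n d(x_n,y_n) \not\to 0$. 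Pairing the normalised molecule $\big(\delta(x_n)-\delta(y_n)\big)/d(x_n,y_n)$ with a Lipschitz function that is nonzero yet locally constant near $p$ gives $0$ for large $n$, so such a contribution can never be recovered by a weak limit; to turn this into an outright contradiction with the weak convergence of $(\gamma_n)_n$, I would construct, after localising near $p$, a $1$-Lipschitz function whose difference quotients $\big(f(x_n)-f(y_n)\big)/d(x_n,y_n)$ fail to converge. Producing such a function for an arbitrary sequence of distinct collapsing pairs is the technical core, and it is precisely what guarantees that weak convergence forces the coefficients within each cluster to stay bounded.

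Once both degeneracies are ruled out, the configuration becomes rigid. Within each (bounded, bounded-coefficient) cluster the representative points form a Cauchy sequence: reading the weak Cauchyness of $(\gamma_n)_n$ against the functions $d(\cdot,a)-d(0_M,a)$ shows that $d(x_i^{(n)},a)$ converges for every $a$, and a separated spreading of the points would again be incompatible with weak convergence through a bump-function construction, which yields the metric Cauchy property. By completeness of $M$ each cluster then collapses to a genuine point $p_1,\dots,p_m \in M$ with $m \le \ell \le k$, and the bounded coefficients converge to scalars $a_1,\dots,a_m$. Finally, since $\norm{\delta(x_i^{(n)})-\delta(x_j^{(n)})} = d(x_i^{(n)},x_j^{(n)}) \to 0$ inside each cluster, a direct estimate gives $\gamma_n \to \sum_{j=1}^m a_j \delta(p_j) \in \FS_k(M)$ in norm along the subsequence, which completes the argument.
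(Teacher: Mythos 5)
Your overall architecture (the subsequence principle, the cluster decomposition along converging pairwise distances, localisation by multiplication operators) is sound and genuinely different from the paper's proof, which proceeds by induction on the size of the support. But the two regimes you yourself flag as delicate are precisely where your proof is missing, and neither admits the quick fix you sketch. For a collapsing cluster you need that a multipole such as $a_n\bigl(\delta(x_n)-\delta(y_n)\bigr)$ with $x_n,y_n\to p$, $d(x_n,y_n)\to 0$ and $|a_n|\,d(x_n,y_n)\not\to 0$ can never be weakly null; you reduce this to constructing one Lipschitz function whose difference quotients fail to converge along an arbitrary collapsing configuration, but you never produce it. For a two-point cluster such a construction can be carried out by passing to a subsequence with rapidly decreasing radii and prescribing values via McShane extension, but a cluster may contain up to $k+1$ points collapsing at several interacting sub-scales with coefficients cancelling in many ways, and handling this in full generality amounts to re-proving by hand the Schur property of $\F(K)$ for $K$ countable and compact. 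That is exactly the nontrivial external input the paper invokes: the collapsing points together with their limit form a countable compact set $K$, and $\F(K)$ has the Schur property by \cite{HLP} (see also \cite{AGPP21}), so weakly null implies norm null there. Without this, or a complete oscillation construction covering multipoles, the central case of your argument is asserted rather than proved.

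The escaping-cluster step fails as stated. Testing against the truncated distances $\min(d(\cdot,0),r)$ shows only that the escaping part pairs to $0$ with these particular functionals for large $n$, i.e.\ at best that its weak limit is $0$; it does not show that the escaping part is \emph{norm}-null, which is what your final norm-convergence claim requires. An escaping dipole, say $\gamma_n=\delta(u_n)-\delta(v_n)$ with $d(u_n,v_n)=1$ and $d(u_n,0)=d(v_n,0)\to\infty$, has $\norm{\gamma_n}=1$ yet pairs to $0$ with every truncated radial function eventually, and the uniform bound $\sup_n\norm{\gamma_n}<\infty$ is no obstruction; ruling such configurations out requires exploiting weak convergence against genuinely non-radial test functions. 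The paper disposes of all unboundedness issues wholesale: it first transfers to a bounded metric space via \cite{AACD20} (an isomorphism $\F(M)\simeq\F(B(M))$ preserving $\FS_k$), and then, when no coordinate sequence has a convergent subsequence, extracts uniformly separated supports and applies Kalton's lemma (\cite{Kalton04}, Lemma~4.5: weakly null with uniformly separated supports implies norm null). Your proposal contains no ingredient playing either role, so both pivotal steps — exactly the points where the paper has to call on substantial results — remain genuine gaps.
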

The previous theorem can be deduced as a direct consequence of the deep result \cite[Theorem 5.2]{AlbiacKalton} and \cite[Lemma 2.10]{ACP20}. Since the proof from \cite{AlbiacKalton} is rather elaborated, for the convenience of the reader, we shall provide a different proof which is based on some recent developments in the theory of Lipschitz-free spaces.
\smallskip

\medskip

\noindent \textbf{Notation and background.} If $X$ is a Banach space, then we let $X^*$ be its topological dual, $B_X$ be its unit ball and $S_X$ be its unit sphere.  
\smallskip

Throughout the paper, $M,N$ are complete pointed metric spaces and the distinguished points will be denoted by $0_M$ and $0_N$ respectively, or simply $0$ if there is no ambiguity. We will write
    $$\widetilde{M} = \{(x,y) \in M \times M \; | \; x \neq y \}.$$
    We will use the notation
\begin{align*}
    B(p,r) &=  \{x \in M \; | \; d(x,p) \leq r \}\\
    \rad(S) &= \sup\{d(x,0) \; | \; x \in S\}
\end{align*}
where $p \in M$ and $S \subset M$. Next, if $(x_n)_n$ is a sequence of elements of $M$, we will say that \textit{$(x_n)_n$ goes to infinity} if $\lim_n d(x_n , 0_M) = \infty$. For convenience, let us recall the vector spaces
\begin{align*}
    \Lip(M) &= \{f \in \K^M \; | \; f \text{ is Lipschitz}\}\\
    \Lip_0(M) &= \{f \in \Lip(M) \; | \; f(0) = 0\} .
\end{align*}
We also wish to recall some important features of  the Lipschitz-free space over $M$,
$$\F(M) := \overline{ \mbox{span}}^{\| \cdot  \|}\left \{ \delta(x) \, : \, x \in M  \right \} \subset \Lip_0(M)^*.$$
First, $\F(M)$ is actually an isometric predual of $\Lip_0(M)$, that is $\F(M)^* \equiv \Lip_0(M)$. Moreover, if $0_M \in K \subset M$, then $\F(K)$ is isomorphic to a subspace of $\F(M)$ in the following way
$$\F(K) \simeq \overline{\text{span}} \{ \delta_M(x) \; | \; x \in K\} \subset \F(M).$$
According to this identification, the \textit{support of $\gamma \in \F(M)$} is the smallest closed subset $K \subset M$ such that $\gamma \in \F(K)$. It is denoted by $\text{supp}(\gamma)$. In particular and according to the terminology introduced before, $\mathcal{FS}_k(M)$ is the set of elements $\gamma \in \F(M)$ such that $\text{supp}(\gamma)$ is finite and $|\text{supp}(\gamma)| \leq k$ (where $|A|$ denote the cardinal of a subset $A \subset M$).
We refer to \cite{AP20, APPP2019} for more background information on the support. We mention here a very simple particular case of Theorem~\ref{thmC} in the case of some sequences in $\mathcal{FS}_1(M)$. We will use this fact in Section~$\ref{Section2}$ without mention, and it can be easily proved by considering the Lipschitz function $y \in M \mapsto d(x,y) - d(x,0_M)$.
\medskip

 \noindent \textbf{Fact:} \textit{If $(x_n)_n \subset M$ is such that $\delta(x_n) \to \delta(x)$ weakly, then $\delta(x_n) \to \delta(x)$ in the norm topology (which is equivalent to saying that $x_n \to x$ in $M$).}
\medskip

We also wish to mention that the Lipschitz-free space over $M$ is isometrically isomorphic to the Lipschitz-free space over its completion $\overline{M}$ in a very natural way. Indeed, it is readily checked that $f \in \Lip_0(\overline{M}) \mapsto f\restricted_M \in \Lip_0(M)$ is a weak$^*$-to-weak$^*$ continuous isometry. Hence, if $\overline{f} : \overline{M} \to \overline{N}$ is the unique extension of $f : M \to N$, then
$\widehat{f}: \F(M) \to \F(N)$ and $\widehat{\overline{f}} : \F(\overline{M}) \to \F(\overline{N})$ are conjugate one to another, so one of them is compact if and only if the other one is.
The only place where we need completeness is in Theorem \ref{thmA}. Indeed, in the proof, we use the fact that if $N$ is complete, then a totally bounded subset of $N$ is relatively compact. However, one could restate this theorem by replacing $N$ by its completion. 
So one can deduce the general statements (without completeness) from our statements (with completeness).
Since there is no real loss of generality, we will assume that $M$ and $N$ are always complete.
\smallskip

To conclude this introduction, let us state the next particular case of Urysohn's lemma that we shall use several times throughout the paper. It allows us to separate two or more points of $M$ by an element of $\text{Lip}_0(M)$. Since we are dealing with metric spaces, a concrete simple formula can be given for the Lipschitz map, but it can also be easily deduced from McShane extension's theorem, see e.g. \cite[Theorem 1.33 and Corollary 1.34]{Weaver2}.

\begin{lemma}
	\label{LemmaConstrctionLipF}
	Let $M$ be a pointed metric space, let $p\in M, p\neq 0_M$ and let $\ep \in (0, d(p,0_M)/4)$. Then there exists $f \in \text{Lip}_0(M)$ such that $f = 1$ on $B(p, \ep)$ and $f=0$ on $M \setminus B(p, 2 \ep).$
\end{lemma}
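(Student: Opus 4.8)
The plan is to write down an explicit formula for $f$ in terms of the distance to $p$ and then verify the three required properties directly; no extension theorem is needed.

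First I would set
$$ f(x) = \min\!\Big(1,\, \max\!\Big(0,\, \tfrac{2\ep - d(x,p)}{\ep}\Big)\Big), \qquad x \in M. $$
To see that $f$ is Lipschitz, note that $x \mapsto (2\ep - d(x,p))/\ep$ is $\tfrac{1}{\ep}$-Lipschitz because $d(\cdot,p)$ is $1$-Lipschitz, and that the truncations $t \mapsto \max(0,t)$ and $t \mapsto \min(1,t)$ are both $1$-Lipschitz on $\R$; since composing on the outside with a $1$-Lipschitz map does not increase the Lipschitz constant, $f$ is $\tfrac{1}{\ep}$-Lipschitz.

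Next I would check the pointwise values. If $x \in B(p,\ep)$ then $d(x,p) \le \ep$, so $(2\ep - d(x,p))/\ep \ge 1$ and hence $f(x)=1$. If $x \in M \setminus B(p,2\ep)$ then $d(x,p) > 2\ep$, so $(2\ep - d(x,p))/\ep < 0$ and hence $f(x)=0$. Finally, membership in $\Lip_0(M)$ requires $f(0_M)=0$, and this is the only place the numerical hypothesis enters: since $\ep < d(p,0_M)/4$ we get $d(0_M,p) > 4\ep > 2\ep$, so $0_M \notin B(p,2\ep)$ and therefore $f(0_M)=0$.

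There is essentially no obstacle in this argument; everything reduces to elementary properties of truncations of the $1$-Lipschitz map $d(\cdot,p)$. The factor $4$ in the hypothesis is not sharp for this construction, as any bound $\ep < d(p,0_M)/2$ would already force $f(0_M)=0$; the extra room is presumably convenient for later applications. As noted in the text, one could instead prescribe $f$ on $B(p,\ep) \cup (M \setminus B(p,2\ep))$ and invoke McShane's extension theorem to extend it to all of $M$ with the same Lipschitz constant, but the closed formula above makes that step superfluous.
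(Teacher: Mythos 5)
Your proof is correct and follows exactly the ``concrete simple formula'' route that the paper alludes to without writing down: the paper gives no actual proof of this lemma, merely remarking that such a formula exists (or that one can invoke McShane's extension theorem), so your explicit construction $f(x)=\min\bigl(1,\max\bigl(0,(2\ep-d(x,p))/\ep\bigr)\bigr)$ with its verification fills in precisely the intended argument. Your side remark is also accurate: $\ep<d(p,0_M)/2$ already suffices for this construction, the factor $4$ in the hypothesis simply providing slack for the later applications (e.g.\ the functions used in Lemma~\ref{Prepmaintheorem}).
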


\section{A metric characterisation of compact Lipschitz operators}\label{Section2}
\label{section2}

The main objective of this section is to prove Theorem~\ref{thmA}. The proof will be based on the next easy but smart observation from \cite{Vargas2} (see Theorem~2.3 therein). This result concerns not only compact operators but also weakly compact operators, and so it will be useful in Section~$\ref{section3}$ as well. We shall provide its short proof for completeness.

\begin{proposition}[\cite{Vargas2}] \label{caracCompact}
	Let $M,N$ be pointed metric spaces and let $f : M \to N$ be a base point-preserving Lipschitz mapping. Then $\widehat{f} : \F(M) \to \F(N)$ is (weakly) compact if and only if 
	$$ \left\{ \dfrac{\delta(f(x)) - \delta(f(y))}{d(x,y)} \; | \; x \neq y \in M \right\} $$
	is relatively (weakly) compact in $\F(N)$.
\end{proposition}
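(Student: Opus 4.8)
The plan is to reduce the statement to the standard description of the unit ball of a Lipschitz-free space as the closed convex hull of its \emph{molecules}. Recall that for $x\neq y\in M$ one sets $m_{x,y}=\dfrac{\delta(x)-\delta(y)}{d(x,y)}\in\F(M)$, that $\|m_{x,y}\|=1$, and that $B_{\F(M)}=\overline{\conv}\{m_{x,y}\;|\;x\neq y\in M\}$ (see \cite{Weaver2}). By Proposition~\ref{diagramfree}, $\widehat{f}$ sends each molecule to $\widehat{f}(m_{x,y})=\dfrac{\delta(f(x))-\delta(f(y))}{d(x,y)}$, so that the set $W$ appearing in the statement is precisely $\widehat{f}\big(\{m_{x,y}\;|\;x\neq y\in M\}\big)$ (with the convention that a pair with $f(x)=f(y)$ simply contributes $0$). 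The whole argument then consists in comparing $\widehat{f}(B_{\F(M)})$ with $W$ through this identity, and recording two elementary inclusions. On the one hand, since each molecule lies in $B_{\F(M)}$, we have $W\subseteq \widehat{f}(B_{\F(M)})$. On the other hand, using that $\widehat{f}$ is linear and norm-continuous,
\[
\widehat{f}(B_{\F(M)})=\widehat{f}\big(\overline{\conv}\{m_{x,y}\}\big)\subseteq \overline{\conv}\,\widehat{f}\big(\{m_{x,y}\}\big)=\overline{\conv}(W).
\]

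For the direction ``$\Longrightarrow$'', which works verbatim in both the compact and the weakly compact settings, I would simply invoke the first inclusion. If $\widehat{f}(B_{\F(M)})$ is relatively (weakly) compact, then so is its subset $W$, because a subset of a relatively (weakly) compact set is again relatively (weakly) compact: its (weak) closure is a (weakly) closed subset of a (weakly) compact set, hence (weakly) compact.

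For the direction ``$\Longleftarrow$'' I would appeal to the classical convex-hull compactness theorems and the second inclusion. In the compact case, if $W$ is relatively compact then Mazur's theorem ensures that $\overline{\conv}(W)$ is compact; since $\widehat{f}(B_{\F(M)})\subseteq\overline{\conv}(W)$, the image is relatively compact and $\widehat{f}$ is compact. In the weakly compact case, if $W$ is relatively weakly compact then Krein's theorem ensures that $\overline{\conv}(W)$ is weakly compact, where one uses that for convex sets the norm and weak closures coincide, so that no ambiguity arises between ``closed convex hull'' and ``weakly closed convex hull''; again $\widehat{f}(B_{\F(M)})\subseteq\overline{\conv}(W)$ gives that $\widehat{f}(B_{\F(M)})$ is relatively weakly compact.

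The only genuinely non-formal ingredient is the identity $B_{\F(M)}=\overline{\conv}\{m_{x,y}\}$; everything else is soft functional analysis. Accordingly, I expect the main (though entirely standard) obstacle to be a clean handling of the convex-hull step: Mazur's theorem in the compact case and Krein's theorem in the weakly compact case, taking care in the latter to exploit the coincidence of weak and norm closures on convex sets so that relative weak compactness of $\overline{\conv}(W)$ follows with no extra argument.
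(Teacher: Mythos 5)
Your proof is correct and follows essentially the same route as the paper's: both identify the set as $\widehat{f}(\mathcal{M})$ for the molecules $\mathcal{M}$, use $\mathcal{M}\subset B_{\F(M)}$ for the forward direction, and use $B_{\F(M)}=\overline{\conv}\,\mathcal{M}$ together with the closed-convex-hull compactness theorems (Mazur, resp.\ Krein--\v{S}mulian) for the converse. If anything, your explicit remark that weak and norm closures of convex sets coincide makes the weakly compact case slightly more careful than the paper's ``verbatim the same'' claim.
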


\begin{proof} We will only prove the statement for compact operators, the proof being verbatim the same in the case of weakly compact operators. 
Notice that 
$$ \left\{ \dfrac{\delta(f(x)) - \delta(f(y))}{d(x,y)} \; | \; x \neq y \in M \right\} = \widehat{f}(\mathcal{M}),$$
where $\mathcal{M} = \left\{ d(x,y)^{-1}(\delta(x) - \delta(y)) \; | \; x \neq y \in M \right\}$. Since $\mathcal{M} \subset B_{\F(M)}$, if $\widehat{f}$ is  compact then $\widehat{f}(\mathcal{M})$ must be relatively compact. Conversely, it follows from the Hahn--Banach separation theorem that $B_{\F(M)} = \overline{\conv} \mathcal{M}$, the closure being taken for the norm topology. Now observe that 
$$ \widehat{f}(B_{\F(M)}) \subset  \widehat{f}(\overline{\conv}\mathcal{M}) \subset \overline{\conv} (\widehat{f}(\mathcal{M})) \subset \overline{\conv} \left(\overline{\widehat{f}(\mathcal{M})}\right).$$
So, if $\widehat{f}(\mathcal{M})$ is relatively compact, then $ \overline{\conv} \left(\overline{\widehat{f}(\mathcal{M})}\right)$ is compact (see e.g. \cite[Theorem~5.35]{IDBook}), and therefore  $\widehat{f}(B_{\F(M)})$ is relatively compact. 
\end{proof}

In the proof of Theorem~$\ref{thmA}$, we will use Proposition~$\ref{caracCompact}$ repeatedly and hence, we will work with sequences of finitely supported elements in Lipschitz-free spaces. By \cite[Lemma 2.10]{ACP20}, the set $\mathcal{FS}_k(M)$ of elements of $\F(M)$ whose support contains at most $k $ elements is weakly closed in $\F(M)$ (in particular, it is norm closed). We will use this fact in various places.

\begin{lemma}\label{Prepmaintheorem}
	Let $k\in \mathbb{N}$ and $(\gamma_n)_n = \big(\sum_{i=1}^k a_{i}(n) \delta(x_{i}(n))\big)_n \subset \mathcal{FS}_k(M)$ be a sequence converging weakly to an element $\gamma \in \mathcal{FS}_k(M)$. Then, for every $p\in \supp(\gamma)$, there exists $1\leq m \leq k$ such that $\underset{n\to+\infty}{\liminf}\,d(x_m(n),p)=0.$ 
\end{lemma}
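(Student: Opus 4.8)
The plan is to argue by contradiction, testing the weak convergence against a bump function centred at $p$. To set up, I first record that since $\gamma \in \FS_k(M)$ is finitely supported it can be written as $\gamma = \sum_{j=1}^{\ell} b_j\,\delta(q_j)$ with the $q_j$ pairwise distinct, each $b_j \neq 0$, and $\ell \le k$; since the point evaluations $\delta(q_j)$ at distinct points are linearly independent, $\supp(\gamma) = \{q_1,\dots,q_\ell\}$. Because $\delta(0_M)=0$, no $q_j$ equals the base point, so the given $p \in \supp(\gamma)$ satisfies $p \neq 0_M$ and coincides with some $q_{j_0}$.

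Next I would turn the negation of the conclusion into a uniform separation statement. Assume, for contradiction, that $\liminf_n d(x_m(n),p) > 0$ for every $1 \le m \le k$. As the index set $\{1,\dots,k\}$ is finite, there exist $\delta > 0$ and $N \in \N$ such that $d(x_m(n),p) > \delta$ for all $m$ and all $n \ge N$; in other words, from index $N$ on, all the atoms $x_m(n)$ of $\gamma_n$ avoid the ball of radius $\delta$ about $p$.

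The heart of the argument is to build a functional that detects $p$ but annihilates these atoms. Picking $\ep' > 0$ with $\ep' < \delta/2$, $\ep' < d(p,0_M)/4$, and $2\ep' < d(p,q_j)$ for every $j \neq j_0$ --- all compatible because the $q_j$ are finitely many, distinct, and different from $0_M$ --- Lemma~\ref{LemmaConstrctionLipF} yields $g \in \Lip_0(M)$ with $g \equiv 1$ on $B(p,\ep')$ and $g \equiv 0$ on $M \setminus B(p,2\ep')$. On the limit side, every $q_j$ with $j \neq j_0$ lies outside $B(p,2\ep')$, so $\langle g,\gamma\rangle = \sum_{j} b_j\, g(q_j) = b_{j_0} \neq 0$. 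On the sequence side, for $n \ge N$ each atom satisfies $d(x_m(n),p) > \delta > 2\ep'$, whence $g(x_m(n)) = 0$ and $\langle g,\gamma_n\rangle = \sum_{m} a_m(n)\, g(x_m(n)) = 0$. Since $g \in \Lip_0(M) = \F(M)^*$, weak convergence forces $\langle g,\gamma\rangle = \lim_n \langle g,\gamma_n\rangle = 0$, contradicting $\langle g,\gamma\rangle = b_{j_0} \neq 0$. Hence some index $m$ must satisfy $\liminf_n d(x_m(n),p) = 0$.

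I expect the only delicate point to be the simultaneous choice of $\ep'$ in the last paragraph: it must separate $p$ from the finitely many other atoms of the limit $\gamma$ and, for large $n$, from all $k$ atoms of $\gamma_n$ at once. The remaining ingredients --- that $p \neq 0_M$ so that Lemma~\ref{LemmaConstrctionLipF} applies and the bump vanishes at the base point, and that finiteness of $\{1,\dots,k\}$ upgrades the pointwise $\liminf$ hypotheses to a uniform bound --- are routine.
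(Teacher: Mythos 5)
Your proof is correct and follows essentially the same approach as the paper's: negate the conclusion to get a uniform separation of all atoms $x_m(n)$ from $p$ (for large $n$, using finiteness of the index set), then test weak convergence against a bump function from Lemma~\ref{LemmaConstrctionLipF} that equals $1$ near $p$ and vanishes on the other atoms of $\gamma$ and on the atoms of $\gamma_n$, yielding $0 = \lim_n \langle g,\gamma_n\rangle = \langle g,\gamma\rangle \neq 0$. If anything, your explicit simultaneous choice of the radius $\ep'$ (small relative to $\delta$, to $d(p,0_M)$, and to the distances from $p$ to the other points of $\supp(\gamma)$) spells out a detail the paper's invocation of its Lemma 1.2 leaves implicit.
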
 

\begin{proof}
Let us write $\gamma = \sum_{i=1}^l a_i \delta(p_i)$ where $1\leq l \leq k$, $a_i \neq 0$ and $p_1, \ldots, p_l $ are pairwise distinct elements of $M \setminus \{0_M\}$.
Aiming for a contradiction, assume that there exists $1\leq j \leq l$ such that none of the sequences $(x_{i}(n))_n$, $1\leq i \leq k$, has a subsequence converging to $p_j$. Then, there exists $\ep > 0$ and a strictly increasing sequence $(n_m)_m \subset \mathbb{N}$ such that, for every $m$ and every $1\leq i\leq k$, $d(x_{i}(n_m), p_j) \geq \ep$. Hence, by Lemma~\ref{LemmaConstrctionLipF} we can find $h \in \Lip_0(M)$ such that $h(p_j) = 1, h(p_i) = 0$ if $i\neq j$ and $h = 0$ outside of $B(p_j, \ep/2)$.
Now, simply notice that since $\gamma_{n_m} \to \gamma$ in the weak topology we have
$$
0 = \< h , \gamma_{n_m} \> \to \< h , \gamma \> = a_j,
$$
which is a contradiction.
\end{proof}

\begin{lemma}\label{Prep2maintheorem} Let $f : M \to N$ be a Lipschitz map such that $f(0_M)=0_N$. Let $(x_n , y_n)_n \subset \widetilde{M}$ and let $(m_n)_n \subset \F(N)$ be defined by $$m_n=\dfrac{\delta(f(x_n)) - \delta(f(y_n))}{d(x_n,y_n)}.$$
Assume that $(m_n)_n$ weakly converges to $\gamma \in \F(N)$.
\begin{enumerate}
\item If $d(x_n,y_n) \to 0$ then $\gamma = 0$.
\item If $d(x_n,y_n) \to + \infty$ then $\gamma = 0$.
\item If there exists $\alpha>0$ such that $d(x_n,y_n) \geq \alpha$ and $\gamma \neq 0$ then $(d(x_n,y_n))_n$ is bounded and $(f(x_n),f(y_n))_n$ has an accumulation point in $N\times N$. 
\end{enumerate} 
\end{lemma}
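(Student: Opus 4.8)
The plan is to handle the three statements with one common setup. Since each molecule $m_n$ belongs to $\mathcal{FS}_2(N)$ and this set is weakly closed (by \cite[Lemma~2.10]{ACP20}), the weak limit $\gamma$ automatically lies in $\mathcal{FS}_2(N)$, so I may write $\gamma=\sum_{i=1}^l a_i\delta(p_i)$ with $l\in\{1,2\}$, all $a_i\neq 0$, and the $p_i$ pairwise distinct and nonzero. Throughout I use that $\|m_n\|=d(f(x_n),f(y_n))/d(x_n,y_n)\le \Lip(f)$ and the bump functions of Lemma~\ref{LemmaConstrctionLipF}. Assuming $\gamma\neq 0$, I fix $p:=p_1$ and choose $h\in\Lip_0(N)$ equal to $1$ on $B(p,\ep)$ and to $0$ off $B(p,2\ep)$, with $\ep$ small enough that $\langle h,\gamma\rangle=a_1$; then $\langle h,m_n\rangle\to a_1$ by weak convergence.

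For (1), first note $d(f(x_n),f(y_n))\le \Lip(f)\,d(x_n,y_n)\to 0$. By Lemma~\ref{Prepmaintheorem} some subsequence of $(f(x_n))_n$ or $(f(y_n))_n$ converges to $p$, and since the two points collapse, \emph{both} $f(x_{n_k})$ and $f(y_{n_k})$ lie in $B(p,\ep)$ for large $k$, where $h\equiv 1$; hence $\langle h,m_{n_k}\rangle=0$, contradicting $\langle h,m_{n_k}\rangle\to a_1\neq 0$. For (2) the argument is shorter: since $0\le h\le 1$ we get $|\langle h,m_n\rangle|\le 1/d(x_n,y_n)\to 0$, again contradicting $\langle h,m_n\rangle\to a_1\neq 0$. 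In both cases $\gamma=0$.

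For (3), boundedness of $(d(x_n,y_n))_n$ follows by contradiction: an unbounded subsequence would satisfy $d(x_{n_k},y_{n_k})\to\infty$, and the reasoning in (2) applied to it would force $\gamma=0$. Once boundedness is known, I pick $p\in\supp(\gamma)$ and combine Lemma~\ref{Prepmaintheorem} with the Bolzano--Weierstrass property of the bounded real sequence $(d(x_n,y_n))_n$ to extract a subsequence along which $f(x_{n_k})\to p$ and $d(x_{n_k},y_{n_k})\to t$ for some $t\in[\alpha,\beta]$, so $t>0$. Solving the defining relation for the second Dirac gives $\delta(f(y_{n_k}))=\delta(f(x_{n_k}))-d(x_{n_k},y_{n_k})\,m_{n_k}$; since $\delta(f(x_{n_k}))\to\delta(p)$ in norm while $d(x_{n_k},y_{n_k})\,m_{n_k}\to t\gamma$ weakly, I obtain $\delta(f(y_{n_k}))\to\eta:=\delta(p)-t\gamma$ weakly. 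As each $\delta(f(y_{n_k}))$ lies in the weakly closed set $\mathcal{FS}_1(N)$, so does $\eta$, hence $\eta=0$ or $\eta=a\delta(z)$ with $z\neq 0_N$.

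The main obstacle is the final step: I must upgrade this weak convergence of single Diracs into honest convergence of the points $f(y_{n_k})$ in $N$, even though $N$ is merely complete (bounded sets need not be relatively compact). This is a strengthening of the ``Fact'' from the introduction, and I plan to prove it by testing against the bounded functions $g_r:=\max(0,\,r-d(\cdot,z))\in\Lip_0(N)$ for small $r>0$: from $g_r(f(y_{n_k}))\to a\,g_r(z)=ar$ one first forces $a>0$ (the left-hand side is nonnegative), and then $d(f(y_{n_k}),z)\to r(1-a)$ for every small $r$, whose independence of $r$ forces $a=1$ and $d(f(y_{n_k}),z)\to 0$, i.e. $f(y_{n_k})\to z$. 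When $\eta=0$ the simpler test function $d(\cdot,0_N)$ yields $f(y_{n_k})\to 0_N$ directly. In either case $(f(x_{n_k}),f(y_{n_k}))$ converges in $N\times N$, which is the desired accumulation point. Notably this route avoids invoking the heavier Theorem~\ref{thmC}.
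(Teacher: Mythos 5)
Your proposal is correct, but it takes a noticeably different route from the paper in parts (2) and (3); only part (1) essentially coincides with the paper's argument (weak closedness of $\mathcal{FS}_2(N)$, Lemma~\ref{Prepmaintheorem}, and a bump function that annihilates $m_{n_k}$ once both $f(x_{n_k})$ and $f(y_{n_k})$ enter the ball where $h\equiv 1$; the paper additionally argues $b=0$ first, which your choice of small $\ep$ renders unnecessary). For (2), the paper extracts $f(x_{n_k})\to p$, observes that $\delta(f(x_{n_k}))/d(x_{n_k},y_{n_k})\to 0$ in norm, and then splits into cases according to whether $(f(y_{n_k}))_k$ stays bounded or goes to infinity, using a second bump to kill the residual Dirac; your single estimate $|\langle h,m_n\rangle|\le 2\|h\|_\infty/d(x_n,y_n)\to 0$ for bounded $h$, combined with the fact that such bumps detect every nonzero element of $\mathcal{FS}_2(N)$, collapses all of this into two lines --- note only that Lemma~\ref{LemmaConstrctionLipF} does not state $0\le h\le 1$, but $h$ is automatically bounded (Lipschitz with bounded support), which is all your estimate needs. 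For (3), the paper splits on whether the second support point $q$ of $\gamma$ is distinct from $0_N$ and $p$ (in which case Lemma~\ref{Prepmaintheorem} applies directly to $(f(y_n))_n$) or degenerate, and in the degenerate case performs the same algebra as you, obtaining $\delta(f(y_{n_k}))\to(1-a\rho)\delta(p)$ weakly and re-invoking Lemma~\ref{Prepmaintheorem}; your treatment is more uniform, deriving in one stroke $\delta(f(y_{n_k}))\to\eta=\delta(p)-t\gamma$ weakly and $\eta\in\mathcal{FS}_1(N)$ by weak closedness. Your $g_r$-argument for upgrading $\delta(f(y_{n_k}))\to a\delta(z)$ weakly to $f(y_{n_k})\to z$ is correct (nonnegativity of $g_r$ forces $a>0$, and the $r$-independence of $\lim_k d(f(y_{n_k}),z)=r(1-a)$ forces $a=1$, hence convergence), and it buys more than the statement requires --- norm convergence of the whole subsequence plus $a=1$ --- but you could have skipped it: Lemma~\ref{Prepmaintheorem} with $k=1$, applied to $(\delta(f(y_{n_k})))_k\to\eta$, already yields a subsequence of $(f(y_{n_k}))_k$ converging to any $z\in\supp(\eta)$, which is all an accumulation point needs (and your $\eta=0$ case via the test function $d(\cdot,0_N)$ is exactly what the paper does when its coefficient $a''$ vanishes). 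One cosmetic point: Lemma~\ref{Prepmaintheorem} only guarantees that $(f(x_n))_n$ \emph{or} $(f(y_n))_n$ subconverges to $p$, so your ``extract a subsequence along which $f(x_{n_k})\to p$'' tacitly uses the harmless symmetric relabeling $m_n\mapsto -m_n$; it would be worth saying so explicitly.
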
 

\begin{proof}
Notice that $(m_n)_n \subset \mathcal{FS}_2(N)$ which is weakly closed so $\gamma = a\delta(p)+b\delta(q) \in \mathcal{FS}_2(N)$ where either $p\neq q$ or $p=q=0$.
\smallskip

Let us prove $(1)$. If $\gamma \neq 0$ then we can assume that $a\neq 0$, $p \neq 0_N$ and, according to Lemma $\ref{Prepmaintheorem}$,  that $(f(x_n))_n$ or $(f(y_n))_n$ has a subsequence converging to $p$. Since $d(x_n,y_n) \to 0$, both subsequences converge, that is, there exists an increasing sequence $(n_k)_k \subset \N$ such that $(f(x_{n_k}))_k$ and $(f(y_{n_k}))_k$ are converging to $p$. The same lemma ensures that $b=0$ so that $\gamma_n \to a\delta(p)$.
Now, let $h \in \Lip_0(N)$ be such that $h$ takes the value 1 on a ball around $p$. Then, for $k$ large enough, $\langle h , m_{n_k} \rangle  = 0 $ while the limit over $k$ of this term is $\langle h , a \delta(p) \rangle = a$, therefore $a=0$. This is a contradiction so we must have $\gamma = 0$.
\smallskip

We now prove $(2)$. We aim for a contradiction. If $\gamma \neq 0$, we may assume that $a\neq 0$ and $p\neq 0_N$ and by Lemma~\ref{Prepmaintheorem}, up to extracting a subsequence, that $(f(x_n))_n$ converges to $p$. Since $(f(x_n))_n$ converges and $d(x_n,y_n) \to + \infty$, we have 
$$\|\cdot\| - \lim\limits_{n \to \infty} \dfrac{\delta(f(x_n))}{d(x_n,y_n)} = 0.$$ 
Therefore $( d(x_n,y_n)^{-1}\delta( f(y_n) )  )_n \subset \mathcal{FS}_1(N)$ must converge to an element $\gamma'=c\delta(r)$. We then distinguish two cases :
\begin{itemize}
	\item[$\bullet$] If for some subsequence, $(f(y_{n_k}))_k$ is bounded, then $m_{n_k} \to 0$ and this is a contradiction.
	\item[$\bullet$] If for some subsequence, $d(f(y_{n_k}),0) \to +\infty$, then $(f(y_{n_k}))_k$ is eventually far from $r$. Similarly as in $(1)$, one can show that $c$ must be equal to 0 by using a Lipschitz map taking the value 1 at $p$ and 0 outside of a ball centered at $r$. So $m_{n_k} \to 0$, yet another contradiction.
\end{itemize}
\smallskip

Let us finish with the proof of $(3)$. As above, since $\gamma \neq 0$ we can assume that $a\neq 0$, $p\neq 0_N$ and $(f(x_n))_n$ converges to $p$. We only need to show that $(f(y_n))_n$ has a convergent subsequence. If $b\neq 0$ and $q$ is not equal to $0_N$ or $p$, then Lemma~\ref{Prepmaintheorem} ensures that $(f(y_n))_n$ has a subsequence converging to $q$. So assume that $b=0$ or $q=0_N$, that is, $\gamma_n \to a\delta(p)$.
Up to extracting another subsequence, we may assume that $d(x_n,y_n)$ converges to $\rho \in (0 , +\infty]$. If $\rho = + \infty$ then $\gamma = 0$ by $(2)$, so we actually have that $\rho \in (0 , +\infty)$. Therefore
$$
\delta(f(x_n)) - \delta(f(y_n)) \to a' \delta(p) \ \ \text{weakly}
$$
where $a'=a\rho$. Since $\delta(f(x_n)) \to \delta(p)$, we have
$$\delta(f(y_n)) \to a'' \delta(p) \ \ \text{weakly}$$
where $a''=1-a'$. If $a'' \neq 0$ then by Lemma $\ref{Prepmaintheorem}$, $f(y_n)$ has a subsequence converging to $p$, and if $a''=0$ then $f(y_n) \to 0_N$. 
\end{proof}

We need one last lemma before the proof of Theorem~\ref{thmA}. For convenience, let us recall we was called $(P_3)$ in this statement.
\begin{enumerate}
	\item[$(P_3)$] For every $(x_n,y_n)_n \subset \widetilde{M} : = \{(x,y) \in M \times M \; | \; x \neq y\}$ such that $\lim\limits_{n \to \infty} d(x_n,0) = \lim\limits_{n \to \infty} d(y_n,0) = \infty$, either 
	\smallskip
	
	\begin{itemize}
		\item $(f(x_n), f(y_n))_n$ has an accumulation point in $N \times N$, or
		\item $\underset{n\to+\infty}{\liminf}\,\dfrac{d(f(x_n),f(y_n))}{d(x_n,y_n)}=0$.
	\end{itemize}
\end{enumerate}

\begin{lemma}\label{3implies5} 
Let $M$ be an unbounded metric space, $N$ be any metric space and $f : M \to N$ be any map. If $f$ satisfies $(P_3)$ then $f$ is radially flat, that is
$$\ \  \lim\limits_{d(x,0) \to \infty} \dfrac{d(f(x),0)}{d(x,0)} =0. $$
\end{lemma}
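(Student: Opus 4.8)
The plan is to argue by contraposition: assuming $f$ is \emph{not} radially flat, I will manufacture a sequence of pairs in $\widetilde{M}$ for which both alternatives in $(P_3)$ fail. Concretely, if the limit defining radial flatness is not $0$, there exist $\ep > 0$ and a sequence $(x_n)_n \subset M$ with $d(x_n,0) \to \infty$ and
$d(f(x_n),0) \geq \ep\, d(x_n,0)$ for all $n$. The first observation I would record is that this forces $d(f(x_n),0) \to \infty$, and hence that $(f(x_n))_n$ has no accumulation point in $N$: any accumulation point $z$ would yield a subsequence along which $d(f(x_{n_k}),0) \to d(z,0) < \infty$, a contradiction. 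Note this uses nothing about $f$ being Lipschitz, only the distance to the base point, which is consistent with the hypotheses of the lemma.

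Next I would perform a diagonal-type extraction. Since both $d(x_n,0) \to \infty$ and $d(f(x_n),0) \to \infty$, I can pass to a subsequence, still denoted $(x_n)_n$, along which both scalar sequences are strictly increasing and grow at least geometrically, say $d(x_{n+1},0) \geq 2\,d(x_n,0)$ and $d(f(x_{n+1}),0) \geq 2\,d(f(x_n),0)$. For large $n$ the points $x_n$ and $x_{n+1}$ are then distinct (their distances to $0$ differ), so $(x_n,x_{n+1})_n \subset \widetilde{M}$, and both coordinates go to infinity. Thus $(P_3)$ applies to this pair sequence, and it remains to contradict it.

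For the first alternative of $(P_3)$: an accumulation point of $(f(x_n),f(x_{n+1}))_n$ in $N \times N$ would in particular give a convergent subsequence of $(f(x_n))_n$, which is impossible by the first paragraph. For the second alternative, I would bound the ratio below using the reverse and ordinary triangle inequalities together with the geometric growth:
$$d(f(x_n),f(x_{n+1})) \geq d(f(x_{n+1}),0) - d(f(x_n),0) \geq \tfrac12\, d(f(x_{n+1}),0) \geq \tfrac{\ep}{2}\, d(x_{n+1},0),$$
while $d(x_n,x_{n+1}) \leq d(x_n,0) + d(x_{n+1},0) \leq \tfrac32\, d(x_{n+1},0)$. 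Hence the ratio is at least $\ep/3$ for all large $n$, so its $\liminf$ is positive. Both bullet points of $(P_3)$ therefore fail for $(x_n,x_{n+1})_n$, the desired contradiction.

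The one genuinely nontrivial step, and the place I expect to spend care, is the construction of the pair sequence: each point must be paired with a \emph{fast enough} successor so that the reverse triangle inequality converts the radial lower bound $d(f(x_{n+1}),0) \geq \ep\, d(x_{n+1},0)$ into a uniform positive lower bound on $d(f(x_n),f(x_{n+1}))/d(x_n,x_{n+1})$, all while the images continue to escape every compact set. Once the geometric growth along the subsequence is secured, the two estimates above are routine, and nothing beyond the triangle inequality is needed.
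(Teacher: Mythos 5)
Your proof is correct, and it takes a genuinely different route from the paper's. You argue by contraposition: from a failure of radial flatness you extract $(x_n)_n$ with $d(f(x_n),0)\geq \varepsilon\, d(x_n,0)$ and $d(x_n,0)\to\infty$, pass to a subsequence along which both $d(x_n,0)$ and $d(f(x_n),0)$ at least double at each step (a legitimate simultaneous extraction, since both tend to infinity), and then pair consecutive terms $(x_n,x_{n+1})$; the accumulation-point alternative of $(P_3)$ fails because $d(f(x_n),0)\to\infty$, and the ratio alternative fails by the reverse triangle inequality, which with your growth condition gives $d(f(x_n),f(x_{n+1}))\geq \tfrac{\varepsilon}{2}d(x_{n+1},0)$ against $d(x_n,x_{n+1})\leq \tfrac32 d(x_{n+1},0)$, so the $\liminf$ is at least $\varepsilon/3$. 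The paper instead argues directly: given an arbitrary sequence $(x_n)_n$ going to infinity, it uses the unboundedness of $M$ to produce \emph{auxiliary} companion points $y_{n_k}$, chosen by an inductive construction satisfying four quantitative conditions, applies $(P_3)$ to the pairs $(x_{n_k},y_{n_k})$, observes that even the accumulation-point alternative forces $d(f(x_{n_k}),f(y_{n_k}))/d(x_{n_k},y_{n_k})\to 0$ (since $d(x_{n_k},y_{n_k})\to\infty$), and transfers this to the radial ratio via the triangle inequality, concluding that the ratio vanishes along a subsequence of any sequence going to infinity (and hence in the limit, by the standard subsequence principle). Your version buys simplicity: no auxiliary points and no four-condition induction, the unboundedness of $M$ comes for free from the negated conclusion rather than being invoked separately, and the accumulation-point case is ruled out outright instead of being absorbed into the ratio estimate. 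The paper's version buys directness: it never negates the limit, and it handles every sequence going to infinity uniformly, which is the shape in which the lemma feeds into case (ii) of the proof of Theorem A. Both arguments use nothing about $f$ beyond $(P_3)$ and the triangle inequality, as the hypotheses require.
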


\begin{proof}
Assume that $f$ satisfies $(P_3)$. Let $(x_n)_n \subset M$ be such that $d(x_n,0) \to +\infty$. We will show that there exists a subsequence $(x_{n_k})_k$ such that $$\dfrac{d(f(x_{n_k}),0)}{d(x_{n_k},0)} \underset{k\to +\infty}{\longrightarrow} 0.$$
In view of applying Property $(P_3)$, we first construct  by induction an increasing sequence $(n_k)_k \subset \mathbb{N}$ and a sequence $(y_{n_k})_k \subset M$ such that for every $k \in \N$
\begin{itemize}
    \item[(i)] $d(y_{n_k},0) \geq k$;
    \item[(ii)] $d(x_{n_k},y_{n_k}) \geq k$;
    \item[(iii)] $\dfrac{d(x_{n_k}, y_{n_k})}{d(x_{n_k}, y_{n_k}) - d(y_{n_k}, 0)} \leq 2$;
    \item[(iv)] $\dfrac{d(f(y_{n_k}), 0)}{d(x_{n_k}, y_{n_k}) - d(y_{n_k}, 0)} \leq \dfrac{1}{k}$.
\end{itemize}

We proceed by induction and start with the base case $k=1$. Since $M$ is unbounded, there is an element $y\in M$ such that $d(y,0) \geq 1$. We fix such $y$. The inequality
$$
d(x_n,y) \geq d(x_n,0)-d(y,0)
$$
yields $d(x_n,y) \underset{n\to +\infty}{\longrightarrow} +\infty$ so that
$$
\dfrac{d(x_n, y)}{d(x_n, y) - d(y, 0)} \underset{n\to +\infty}{\longrightarrow} 1 \ \ \ \text{and} \ \ \ \dfrac{d(f(y), 0)}{d(x_n, y) - d(y, 0)} \underset{n\to +\infty}{\longrightarrow} 0.
$$
Hence, we can find $n_1 \in \mathbb{N}$ large enough so that
$$
d(x_{n_1},y) \geq 1, \ \dfrac{d(x_{n_1}, y)}{d(x_{n_1}, y) - d(y, 0)} \leq 2 \ \ \ \text{and} \ \ \ \dfrac{d(f(y), 0)}{d(x_{n_1}, y) - d(y, 0)} \leq 1.
$$
We then set $y_{n_1}=y$.

Assume now that  $y_{n_1}, \ldots, y_{n_k} \in M$ are constructed with $n_1 < n_2 < \cdots < n_k$. We can find $y\in M$ such that $d(y,0) \geq k+1$. We now proceed as above, and we find $n_{k+1} \in \mathbb{N}$ such that $n_k < n_{k+1}$ and
$$
d(x_{n_{k+1}},y) \geq k+1, \ \dfrac{d(x_{n_{k+1}}, y)}{d(x_{n_{k+1}}, y) - d(y, 0)} \leq 2 \ \ \ \text{and} \ \ \ \dfrac{d(f(y), 0)}{d(x_{n_{k+1}}, y) - d(y, 0)} \leq \dfrac{1}{k+1}.
$$
We can now set $y_{n_{k+1}} = y$ and by construction, the sequence $(y_{n_k})_k \subset M$ satisfies the desired properties.\\
In particular, $d(y_{n_k},0) \to +\infty$, so we can apply $(P_3)$ to $(x_{n_k})_k$ and $(y_{n_k})_k$ and we keep denoting by $(x_{n_k})_k$ and $(y_{n_k})_k$ the subsequences that we obtain. Hence, we  either have $f(x_{n_k}) \to p$ and $f(y_{n_k}) \to q$ for some $p,q \in N$ or $\frac{d(f(x_{n_k}) , f(y_{n_k}))}{d(x_{n_k},y_{n_k})} \to 0$. Note that if we are in the first case, then we also have 
\begin{equation}\label{CV0}
    \dfrac{d(f(x_{n_k}) , f(y_{n_k}))}{d(x_{n_k},y_{n_k})} \to 0
\end{equation}
because $d(x_{n_k},y_{n_k}) \to +\infty$,
and that is the property we will need. Indeed, by the triangle inequality
\begin{align*}
\dfrac{d(f(x_{n_k}), 0)}{d(x_{n_k},0)} & \leq \dfrac{d(f(x_{n_k}), f(y_{n_k}))}{d(x_{n_k},y_{n_k}) - d(y_{n_k},0)} + \dfrac{d(f(y_{n_k}), 0)}{d(x_{n_k},y_{n_k}) - d(y_{n_k},0)} \\
& = \dfrac{d(f(x_{n_k}), f(y_{n_k}))}{d(x_{n_k}, y_{n_k})} \dfrac{d(x_{n_k},y_{n_k})}{d(x_{n_k},y_{n_k}) - d(y_{n_k},0)} + \dfrac{d(f(y_{n_k}), 0)}{d(x_{n_k},y_{n_k}) - d(y_{n_k},0)}
\end{align*}
and the right hand side converges to $0$ by $(iii)$, $(iv)$ and \eqref{CV0}.
\end{proof}

\begin{maintheorem} \label{thmA}
	Let $M,N$ be complete pointed metric spaces, and let $f : M \to N$ be a base point-preserving Lipschitz mapping. Then $\widehat{f} : \F(M) \to \F(N)$ is compact if and only if the next assertions are satisfied:
	\begin{enumerate}
		\item[$(P_1)$]  For every bounded subset $S \subset M$, $f(S)$ is totally bounded in $N$;
		\item[$(P_2)$] $f$ is uniformly locally flat, that is, 
		$$ \lim\limits_{d(x,y) \to 0} \dfrac{d(f(x),f(y))}{d(x,y)} =0;$$
		\item[$(P_3)$] For every $(x_n,y_n)_n \subset \widetilde{M} : = \{(x,y) \in M \times M \; | \; x \neq y\}$ such that\\
		$\lim\limits_{n \to \infty} d(x_n,0) = \lim\limits_{n \to \infty} d(y_n,0) = \infty$, either 
		\smallskip
		
		\begin{itemize}
			\item $(f(x_n), f(y_n))_n$ has an accumulation point in $N \times N$, or
			\item $\underset{n\to+\infty}{\liminf}\,\dfrac{d(f(x_n),f(y_n))}{d(x_n,y_n)}=0$.
		\end{itemize}
	\end{enumerate}
\end{maintheorem}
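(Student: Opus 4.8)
The plan is to reduce everything, via Proposition~\ref{caracCompact}, to the relative compactness of the set of image molecules
$$\widehat{f}(\mathcal{M}) = \set{\frac{\delta(f(x))-\delta(f(y))}{d(x,y)} \; : \; (x,y) \in \widetilde{M}},$$
and then to analyse norm convergence of sequences $m_n = d(x_n,y_n)^{-1}\big(\delta(f(x_n)) - \delta(f(y_n))\big)$ of such molecules through Lemmas~\ref{Prep2maintheorem} and~\ref{3implies5}. Throughout I would write $a_n = d(x_n,0)$, $b_n = d(y_n,0)$, $d_n = d(x_n,y_n)$, and freely pass to subsequences so that $a_n,b_n,d_n$ converge in $[0,+\infty]$; the inequalities $|a_n-b_n| \le d_n \le a_n+b_n$ will repeatedly tie their limits together.

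For the direct implication I assume $\widehat{f}$ compact, equivalently $\widehat{f}(\mathcal{M})$ relatively compact. For $(P_1)$, if some bounded $S$ had $f(S)$ not totally bounded, I would extract an $\ep$-separated sequence $(f(s_n))$ in $f(S)$ and form the molecules $m_n = d(s_n,s_1)^{-1}\big(\delta(f(s_n))-\delta(f(s_1))\big)$; testing against the $1$-Lipschitz bumps from Lemma~\ref{LemmaConstrctionLipF} supported on $B(f(s_n),\ep/2)$ (which may be taken in $\Lip_0(N)$ after discarding at most one index) shows, using $\diam(S)<\infty$, that $(m_n)$ is uniformly norm-separated, contradicting relative compactness. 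For $(P_2)$, a failure yields $x_n\neq y_n$ with $d_n\to 0$ and $\|m_n\| = d(f(x_n),f(y_n))/d_n \ge \ep$, so any norm limit of a subsequence would be nonzero, contradicting Lemma~\ref{Prep2maintheorem}(1). For $(P_3)$, given $(x_n,y_n)$ with $a_n,b_n\to\infty$, I would extract a norm limit $m_{n_k}\to\gamma$: if $\gamma=0$ then $\|m_{n_k}\|\to 0$ gives the second alternative, while if $\gamma\neq 0$ then Lemma~\ref{Prep2maintheorem}(1) forces $\liminf_k d_{n_k}>0$, after which Lemma~\ref{Prep2maintheorem}(3) delivers the accumulation point of $(f(x_n),f(y_n))$ required by the first alternative.

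For the converse I assume $(P_1)$--$(P_3)$ and show that every sequence of image molecules admits a norm-convergent subsequence. After extracting so that $d_n\to d^\ast\in[0,+\infty]$, I split into three regimes. If $d^\ast=0$, condition $(P_2)$ gives $\|m_n\|\to 0$ directly. If $d^\ast\in(0,+\infty)$, then $|a_n-b_n|\le d_n$ forces $a_n,b_n$ to be simultaneously bounded or simultaneously infinite: in the bounded case $(P_1)$ together with completeness of $N$ makes $\overline{f(S)}$ compact, so $\delta(f(x_{n_k}))$ and $\delta(f(y_{n_k}))$ converge in norm and hence so does $m_{n_k}$; in the infinite case $(P_3)$ produces a subsequence along which either $f(x),f(y)$ converge, giving $m_{n_k}\to (d^\ast)^{-1}(\delta(p)-\delta(q))$, or the quotient tends to $0$, giving $m_{n_k}\to 0$. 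Finally, if $d^\ast=+\infty$, then either both $a_n,b_n\to\infty$, where $(P_3)$ again forces $\|m_{n_k}\|\to 0$ (the accumulation-point case collapses because $d_n\to\infty$), or one endpoint stays bounded, where I combine $(P_1)$ on the bounded endpoint with the radial flatness of Lemma~\ref{3implies5} and the estimate $a_n/d_n\to 1$ on the unbounded endpoint to obtain $\|m_n\|\to 0$.

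The main obstacle is the bookkeeping of the converse: ensuring the case split over the limiting behaviour of $a_n,b_n,d_n$ is genuinely exhaustive and that each degenerate configuration is covered by exactly one of $(P_1),(P_2),(P_3)$ or by radial flatness. The single most delicate point is the mixed regime $d^\ast=+\infty$ with one endpoint bounded, where $(P_3)$ cannot be applied (it requires both endpoints to go to infinity) and one must instead invoke Lemma~\ref{3implies5}; controlling $d(f(x_n),0)/d_n$ there via $d(f(x_n),0)/a_n\to 0$ and $a_n/d_n\to 1$ is the crux of the whole argument.
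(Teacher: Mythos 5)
Your proposal is correct, and its skeleton is the same as the paper's: reduction to the set of image molecules via Proposition~\ref{caracCompact}, the forward direction run through Lemma~\ref{Prep2maintheorem}, and the converse's mixed regime handled by the radial flatness of Lemma~\ref{3implies5} together with $a_n/d_n \to 1$, which is literally the paper's computation in its case (ii). You depart genuinely in one sub-step: for ``compact $\implies (P_1)$'' the paper applies Lemma~\ref{Prep2maintheorem} to the molecules $\delta(f(x_n))/d(x_n,0_M)$ (i.e.\ it takes $y_n = 0_M$) and reads off convergence of a subsequence of $(f(x_n))_n$ from items (1) and (3), whereas you argue by contradiction: an $\ep$-separated sequence in $f(S)$ yields, via the bumps of Lemma~\ref{LemmaConstrctionLipF} tested against $m_n = d(s_n,s_1)^{-1}\bigl(\delta(f(s_n))-\delta(f(s_1))\bigr)$, an infinite uniformly norm-separated subset of $\widehat{f}(\mathcal{M})$, which cannot be totally bounded. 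This variant is self-contained (it bypasses Lemma~\ref{Prep2maintheorem} for this step), and your two caveats are exactly the right ones: $\diam(S)<\infty$ keeps the pairings $\langle g_n, m_n - m_m\rangle = 1/d(s_n,s_1) \geq 1/\diam(S)$ bounded below, and strict $\ep$-separation guarantees at most one $f(s_n)$ lies within $\ep/2$ of $0_N$, so discarding one index does make the bumps land in $\Lip_0(N)$. The only slip is calling those bumps $1$-Lipschitz---the construction of Lemma~\ref{LemmaConstrctionLipF} gives Lipschitz constant of order $1/\ep$---but this is harmless since $\ep$ is fixed and only uniformity of the constants over $n$ matters for the separation estimate. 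In the converse, your primary trichotomy on $d^\ast = \lim d(x_n,y_n) \in \{0\}\cup(0,\infty)\cup\{+\infty\}$, refined by the endpoint dichotomy through $|a_n-b_n| \leq d_n \leq a_n+b_n$, is a reshuffling of the paper's primary split on boundedness of $(x_n)$ and $(y_n)$ (the paper extracts $\rho = \lim d(x_{n_k},y_{n_k})$ inside each of its cases, playing the role of your $d^\ast$); the coverage is the same and each regime is discharged by the same hypothesis in both proofs. One small remark, which applies to the paper's case (ii) just as much as to your mixed regime: only boundedness of the image of the bounded endpoint is actually used there (automatic since $f$ is Lipschitz), so invoking $(P_1)$ on that endpoint is a harmless redundancy in both arguments.
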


\begin{remark}
Assume that the condition $(P_3)$ is satisfied. Then, if $(x_n,y_n)_n \subset \widetilde{M}$ is such that $\dfrac{d(f(x_n),f(y_n))}{d(x_n,y_n)}$ does not converge to $0$, there is a subsequence $(x_{n_k},y_{n_k})_k$ such that $\underset{k\to+\infty}{\liminf}\,\dfrac{d(f(x_{n_k}),f(y_{n_k}))}{d(x_{n_k},y_{n_k})}>0$. This implies that $(f(x_{n_k}), f(y_{n_k}))_k$ and hence $(f(x_n), f(y_n))_n$, has an accumulation point in $N \times N$. This tells us that we can reformulate condition $(P_3)$ by :

\begin{enumerate}
    \item[$(P_3')$] For every $(x_n,y_n)_n \subset \widetilde{M} : = \{(x,y) \in M \times M \; | \; x \neq y\}$ such that
		$\lim\limits_{n \to \infty} d(x_n,0) = \lim\limits_{n \to \infty} d(y_n,0) = \infty$, either 
		\smallskip
		
		\begin{itemize}
			\item $(f(x_n), f(y_n))_n$ has an accumulation point in $N \times N$, or
			\item $\underset{n\to+\infty}{\lim}\,\dfrac{d(f(x_n),f(y_n))}{d(x_n,y_n)}=0$.
		\end{itemize}
		\end{enumerate}
\end{remark}

\smallskip

\begin{proof}[Proof of Theorem $\ref{thmA}$.]
We first prove the ``$\implies$" direction. 
\medskip

We start with $\widehat{f}$ compact implies $(P_1)$. Let $S$ be a bounded subset of $M$ and let $(x_n)_n$ be a sequence in $S$. By assumption (and Proposition \ref{caracCompact}), the sequence
$$ (m_n)_n := \left(\widehat{f}\left(\frac{\delta(x_n)}{d(x_n,0)} \right)\right)_n = \left(\frac{\delta(f(x_n))}{d(x_n,0)}\right)_n$$
has a convergent subsequence $(m_{n_k})_k$.
Denote by $\gamma$ the limit of $(m_{n_k})_k$. If $\gamma = 0$, then
$$
d(f(x_{n_k}), 0_N) =\| m_{n_k} \| d(x_{n_k}, 0_M) \underset{k \to \infty}{\longrightarrow} 0
$$
because $(x_{n_k})_k$ is bounded. In that case, $f(x_{n_k}) \to 0_N$ and we are done. Hence, it only remains to consider the case when $\gamma \neq 0$. By Lemma $\ref{Prep2maintheorem}$, this can only happen if $d(x_{n_k},0_M)$ does not tend to $0$. But then, we can find a subsequence, still denoted by $(n_k)_k$ for convenience, such that $d(x_{n_k},0_M) \geq \alpha > 0$ for every $k$. By the same Lemma, we then must have a subsequence of $(f(x_{n_k}))_k$ which converges and this finishes to prove $(P_1)$.

\medskip

We now show that $\widehat{f}$ compact implies $(P_2)$. Let $(x_n)_n$, $(y_n)_n$ be two sequences in $M$ such that $d(x_n, y_n) \to 0$. By Proposition \ref{caracCompact}, the sequence
$$ \left(\frac{\delta(f(x_n)) - \delta(f(y_n))}{d(x_n,y_n)}\right)_n$$
has a converging subsequence. However it follows immediately from Lemma~\ref{Prep2maintheorem}~(1) that the limit is $0$.

\medskip

It remains to prove that $\widehat{f}$ compact implies $(P_3)$. We already know that if $\widehat{f}$ is compact then $f$ satisfies $(P_2)$, which will be of use. Let $(x_n)_n, (y_n)_n \subset M$ going to infinity with $x_n \neq y_n$. Again, we let
$$m_n := \dfrac{\delta(f(x_n)) - \delta(f(y_n))}{d(x_n,y_n)}$$
and $(m_n)_n$ has a convergent subsequence, which we keep denoting by $(m_n)_n$, for simplicity. Let $\gamma$ be the limit of $(m_n)_n$. Notice that 
$$ \|m_n \| = \dfrac{d(f(x_n) , f(y_n))}{d(x_n,y_n)}.$$

We distinguish two cases: up to extracting a further subsequence, we will need to consider the cases when $d(x_n,y_n)$ converges to $0$ and when there exists $\alpha >0$ such that $d(x_n,y_n) \geq \alpha$ for every $n$. In the first case, we get by $(P_2)$ that $m_n \to 0$ so that $\|m_n\| \to 0$. In the second case, if $\gamma \neq 0$, we have by Lemma~\ref{Prep2maintheorem}~(3) that there exist $p,q\in N$ and an increasing sequence $(n_k)_k \subset \N$ such that $f(x_{n_k}) \to p$ and $f(y_{n_k}) \to q$. Finally if $\gamma = 0$ then again $\|m_n\| \to 0$. In all cases, $f$ satisfies $(P_3)$.
\bigskip

Let us now prove the ``$\impliedby$" direction.  We keep using the notation
$$(m_n)_n :=\left(\frac{\delta(f(x_n)) - \delta(f(y_n))}{d(x_n,y_n)}\right)_n$$
where $x_n \neq y_n \in M$ for every $n \in \N$. By Proposition $\ref{caracCompact}$, we have to show that this sequence admits a convergent subsequence in $\F(N)$. Up to extracting a subsequence, we only have to distinguish three cases : when both $(x_n)_n$ and $(y_n)_n$ are bounded, when one of them is bounded while the other one goes to $+\infty$, and when both go to $+\infty$.

\begin{enumerate}[(i), leftmargin=*,itemsep=5pt]
    \item If $(x_n)_n$ and $(y_n)_n$ are bounded, by $(P_1)$ there exists an increasing sequence $(n_k)_k \subset \N$ such that $(f(x_{n_k}))_k$ converges to a point $p \in N$ and $(f(y_{n_k}))_k$ converges to some $q\in N$. Since the sequence $(d(x_{n_k},y_{n_k}))_k$ is bounded, up to a further extraction, we may assume that it converges to some $\rho \geq 0$. Since $f$ is uniformly locally flat, if $\rho = 0$ then $(m_{n_k})_k$ converges to 0. If $\rho>0$, then it is readily seen that $(m_{n_k})_k$ converges to $\rho^{-1}(\delta(p) - \delta(q))$.
    
    \item If $(x_n)_n$ is bounded while $d(y_n,0) \to \infty$, thanks to $(P_1)$ there exists an increasing sequence $(n_k)_k \subset \N$ such that
    $(f(x_{n_k}))_k$ converges to a point $p \in N$. Therefore we may write for every $k \in \N$:
    \begin{align*}
        m_{n_k} &= \frac{\delta(f(x_{n_k})) - \delta(f(y_{n_k}))}{d(x_{n_k},y_{n_k})}\\
        &= \frac{\delta(f(x_{n_k})) - \delta(0)}{d(x_{n_k},y_{n_k})} + \frac{\delta(0) - \delta(f(y_{n_k}))}{d(0,y_{n_k})}  \frac{d(0,y_{n_k})}{d(x_{n_k},y_{n_k})}. 
    \end{align*}
    On the one hand,
    $$ \left\|\frac{\delta(f(x_{n_k})) - \delta(0)}{d(x_{n_k},y_{n_k})}\right\| = \frac{d(f(x_{n_k}) , 0)}{d(x_{n_k},y_{n_k})}  \underset{k \to \infty}{\longrightarrow} 0.$$
    On the other hand, $f$ is radially flat thanks to Lemma~$\ref{3implies5}$ so that
    $$  \left\|\frac{\delta(0) - \delta(f(y_{n_k}))}{d(0,y_{n_k})}\right\| = \frac{d(f(y_{n_k}) , 0)}{d(y_{n_k},0)}  \underset{k \to \infty}{\longrightarrow} 0.  $$
    Since the triangle inequality implies that 
    $\lim\limits_{k \to \infty} d(0 , y_{n_k})^{-1} d(x_{n_k},y_{n_k}) = 1$, we obtain that $(m_{n_k})_k$ converges to 0.

    \item If $d(x_n,0) \to +\infty$ and $d(y_n,0)\to +\infty$, then by $(P_3)$ there exists $(n_k)_k \subset \N$ such that, either $\|m_{n_k}\| \to 0$ or $f(x_{n_k}) \to p$ and $f(y_{n_k}) \to q$ for some $p,q \in N$. In the first case we are done since $(m_{n_k})_k$ converges to 0. In the second case, up to further extraction, we may assume that $d(x_{n_k},y_{n_k}) \to \rho \in [0,+\infty]$. Hence, $m_{n_k}$ converges to $0$ if $\rho = 0$ or $\rho = +\infty$ and converges to $\rho^{-1}(\delta(p) - \delta(q))$ otherwise.
\end{enumerate}
In all cases, the sequence $(m_n)_n$ admits a convergent subsequence.
\end{proof}

Of course, condition $(P_3)$ is always satisfied if the metric space $M$ is bounded. Similarly, condition $(P_2)$ is always satisfied if the space is uniformly discrete, that is, $\inf_{x\neq y} d(x,y) >0$. On the other hand, if $M=\R=N$ with the usual metric $|.|$, this condition means that $f'=0$ and hence $f=0$ because $f(0)=0$. In particular, according to Theorem~\ref{thmA}, the only compact Lipschitz operator $\widehat{f} : \F(\R) \to \F(\R)$ is $0$.
Furthermore, $(P_3)$ may seem uneasy to check. The next result shows that we may replace this property by a stronger yet simpler condition. Nonetheless, Example~$\ref{P4notnecessary}$ will show that this condition is not necessary.

\begin{corollary}
Let $M,N$ be complete pointed metric spaces, and let $f : M \to N$ be a base point-preserving Lipschitz mapping. If $f$ satisfies
\begin{enumerate}
    \item[$(P_1)$] For every bounded subset $S \subset M$, $f(S)$ is totally bounded in $N$;
    \item[$(P_2)$] $f$ is uniformly locally flat, that is, 
    $$ \lim\limits_{d(x,y) \to 0} \dfrac{d(f(x),f(y))}{d(x,y)} =0;$$
    \item[$(P_4)$] $f$ is flat at infinity, that is,  
        $$ \underset{d(y,0) \to \infty}{\lim\limits_{d(x,0) \to \infty}}  \dfrac{d(f(x),f(y))}{d(x,y)} =0,$$
\end{enumerate}
then $\widehat{f} : \F(M) \to \F(N)$ is compact.
\end{corollary}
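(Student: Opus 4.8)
The plan is to reduce the whole statement to Theorem~\ref{thmA}. That theorem characterizes compactness of $\widehat{f}$ by the conjunction of $(P_1)$, $(P_2)$ and $(P_3)$. Since $(P_1)$ and $(P_2)$ are already part of our hypotheses, the only thing left to verify is that $(P_4)$ implies $(P_3)$; once this is done, Theorem~\ref{thmA} gives compactness of $\widehat{f}$ immediately. So the entire content of the proof is the implication $(P_4) \implies (P_3)$.

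First I would unwind the meaning of flatness at infinity. The condition $(P_4)$ says precisely that for every $\varepsilon > 0$ there exists $R > 0$ such that $d(f(x),f(y)) \leq \varepsilon\, d(x,y)$ whenever $x \neq y$ with $d(x,0) \geq R$ and $d(y,0) \geq R$. (If $M$ is bounded there is nothing to check, as $(P_3)$ and $(P_4)$ are then both vacuous, so I may assume $M$ unbounded.) Next, take any pair of sequences $(x_n)_n, (y_n)_n \subset \widetilde{M}$ with $\lim_n d(x_n,0) = \lim_n d(y_n,0) = \infty$. Fix $\varepsilon > 0$ and let $R = R(\varepsilon)$ be as above. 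Since both $d(x_n,0)$ and $d(y_n,0)$ eventually exceed $R$, for all sufficiently large $n$ we get $\frac{d(f(x_n),f(y_n))}{d(x_n,y_n)} \leq \varepsilon$.

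Letting $\varepsilon \to 0$ then yields
$$\lim_{n\to\infty} \frac{d(f(x_n),f(y_n))}{d(x_n,y_n)} = 0,$$
so the second alternative of $(P_3)$ holds — in fact its stronger full-limit form $(P_3')$ from the preceding Remark. Hence $f$ satisfies $(P_3)$, and combining this with the assumed $(P_1)$ and $(P_2)$, Theorem~\ref{thmA} shows that $\widehat{f} : \F(M) \to \F(N)$ is compact.

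I do not expect a genuine obstacle here: the key observation is simply that $(P_4)$ is a \emph{full-limit} statement, so whenever both sequences go to infinity the second alternative of $(P_3)$ is automatically realized as a true limit, with no need to extract subsequences or to decide which of the two alternatives occurs. The only mild care required is in reading $(P_4)$ as the uniform $\varepsilon$--$R$ statement above, which is exactly what makes the passage to sequences going to infinity immediate.
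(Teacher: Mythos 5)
Your proof is correct and follows essentially the same route as the paper, whose entire proof is the one-line remark that $(P_4)$ implies $(P_3)$, after which Theorem~\ref{thmA} applies. You have simply spelled out that ``readily seen'' implication via the $\varepsilon$--$R$ reformulation of the double limit, including the correct observation that boundedness of $M$ makes both conditions vacuous, so nothing is missing.
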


\begin{proof}
It is readily seen that $(P_4)$ implies $(P_3)$.
\end{proof}

\begin{remark}
Assume that $\widehat{f}$ is compact. It follows from Proposition $\ref{caracCompact}$ (or the proof of Theorem $\ref{thmA}$) and Lemma $\ref{Prep2maintheorem}$ that $f$ satisfies the following property
$$
\underset{d(x,y)\to +\infty}{\lim} \ \dfrac{d(f(x),f(y))}{d(x,y)} = 0.
$$
This property is stronger than the condition ``radially flat'' from Lemma $\ref{3implies5}$, but weaker than the condition ``flat at infinity'' from the previous corollary.
\end{remark}

\begin{example}[Property $(P_4)$ is not necessary]\label{P4notnecessary}
Consider $(M,d) =(\N \cup \{0\}, |\cdot|)$ and $f : M \to M$ obtained by $f(2n) = 0$ and $f(2n+1) = 1$. Then $f$ is clearly Lipschitz and $\widehat{f} : \F(M) \to \F(M)$ is compact because its range is finite dimensional. Even so, if we let $x_n = 2n+1$ and $y_n=2n$ then $d(x_n,0), d(y_n,0) \to +\infty$ while $\frac{d(f(x_n), f(y_n))}{d(x_n,y_n)} = 1$ for every $n$. Consequently $f$ does not satisfy $(P_4)$.
\end{example}

In fact, in the previous example, $f$ satisfies a much stronger property: $f(M)$ is totally bounded. 

\begin{corollary}
Let $M,N$ be complete pointed metric spaces, and let $f : M \to N$ be a base point-preserving Lipschitz mapping. If $f(M)$ is totally bounded in $N$ and $f$ is uniformly locally flat, then $\widehat{f} : \F(M) \to \F(N)$ is compact.
\end{corollary}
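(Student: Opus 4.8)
The plan is to deduce this directly from Theorem~\ref{thmA} by checking that the two hypotheses force all three of the properties $(P_1)$, $(P_2)$ and $(P_3)$. Property $(P_2)$ is literally the assumption that $f$ is uniformly locally flat, so there is nothing to do there. For $(P_1)$, I would simply invoke the fact that total boundedness is hereditary under taking subsets: given any bounded (indeed, any) subset $S \subset M$, one has $f(S) \subset f(M)$, and a subset of a totally bounded set is again totally bounded, so $f(S)$ is totally bounded in $N$. Thus $(P_1)$ holds with the single observation that the full image $f(M)$ is totally bounded.

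The only property requiring a short argument is $(P_3)$, and here I would exploit the completeness of $N$. Since $f(M)$ is totally bounded and $N$ is complete, its closure $\overline{f(M)}$ is compact. Consequently, for any pair of sequences $(x_n)_n,(y_n)_n \subset M$ — in particular for those going to infinity as in the statement of $(P_3)$ — the sequences $(f(x_n))_n$ and $(f(y_n))_n$ both take values in the compact set $\overline{f(M)}$. Extracting first a subsequence along which $(f(x_n))_n$ converges, and then a further subsequence along which $(f(y_n))_n$ converges, I obtain a subsequence along which $(f(x_n),f(y_n))_n$ converges in $N \times N$. Hence the first alternative in $(P_3)$ is always available, so $(P_3)$ holds unconditionally.

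With $(P_1)$, $(P_2)$ and $(P_3)$ all verified, Theorem~\ref{thmA} immediately gives that $\widehat{f} : \F(M) \to \F(N)$ is compact, which completes the argument. I do not anticipate any genuine obstacle: the essential point is that total boundedness of the entire image $f(M)$ is a strong enough hypothesis to trivialize both the ``local'' condition $(P_1)$ (by heredity) and the ``at infinity'' condition $(P_3)$ (by compactness of $\overline{f(M)}$), leaving uniform local flatness $(P_2)$ as the only substantive requirement. This also clarifies why the present statement subsumes Example~\ref{P4notnecessary}, where $f(M)$ is finite and hence trivially totally bounded, so that $\widehat{f}$ is compact even though $(P_4)$ fails.
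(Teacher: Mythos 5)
Your argument is correct and is essentially the paper's own proof: the authors likewise note that $(P_1)$ follows trivially from $f(S)\subset f(M)$ and that $(P_3)$ holds because, $\overline{f(M)}$ being compact (here completeness of $N$ enters), the sequences $(f(x_n))_n$ and $(f(y_n))_n$ admit a common convergent subsequence, so the first alternative of $(P_3)$ always obtains. Your write-up merely makes explicit the successive-extraction step and the role of completeness, both of which the paper leaves implicit.
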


\begin{proof}
If $f(M)$ is totally bounded then clearly $f$ satisfies $(P_1)$. Moreover if $(x_n)_n$ and $(y_n)_n$ are two sequences in $M$ going to infinity, then the sequences $(f(x_n))_n$ and $(f(y_n))_n$ have a common convergent subsequence and so $f$ readily satisfies $(P_3)$ in Theorem~$\ref{thmA}$.
\end{proof}

\begin{example}[$f(M)$ totally bounded is not necessary]\label{P4notnecessary2}
Take $M= \N \cup \{0\}$ equipped with the metric given by $d(n,0)=n!$ and $d(n,m)=n!+m!$ if $n\neq m$. Define $f : M \to M$ by $f(0)=0$ and $f(n) = n-1$ if $n\geq 1$. Then $f(M)=M$ is clearly not totally bounded while $\widehat{f}$ is compact as it satisfies $(P_1), (P_2)$ and $(P_4)$.
\end{example}

\section{Weak compactness of Lipschitz operators}\label{section3}

As we already mentioned in the introduction, Theorem~\ref{thmB} is an easy consequence of Theorem~\ref{thmC}, which states that norm-convergence and weak-convergence are equivalent for sequences in $\mathcal{FS}_k(M)$, plus some other classical results concerning (weakly) compact operators. We postpone the proof of Theorem~\ref{thmC} in order to first discuss its use in the proof of Theorem~\ref{thmB}.

\begin{maintheorem} \label{thmB}	Let $M,N$ be complete pointed metric spaces, and let $f : M \to N$ be a base point-preserving Lipschitz mapping. The the next conditions are equivalent
	\begin{enumerate}
		\item $\widehat{f} : \F(M) \to \F(N)$ is compact;
		\item $\widehat{f} : \F(M) \to \F(N)$ is weakly compact;
		\item $C_f :  \Lip_0(N) \to \Lip_0(M)$ is compact;
		\item $C_f :  \Lip_0(N) \to \Lip_0(M)$ is weakly compact;
		\item $ C_f : \Lip_0(N) \to \Lip_0(M) \ \text{is weak}^*\text{-to-weak continuous}$.
	\end{enumerate}
\end{maintheorem}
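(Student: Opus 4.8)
The plan is to reduce the whole statement to Proposition~\ref{caracCompact} together with Theorem~\ref{thmC} and two classical duality principles. I begin by recording the structural identifications recalled in the introduction: $\F(M)^* = \Lip_0(M)$, $\F(N)^* = \Lip_0(N)$, and $C_f = (\widehat{f}\,)^*$. Writing $T := \widehat{f} \colon \F(M) \to \F(N)$, conditions $(3)$, $(4)$, $(5)$ thus become assertions about the adjoint $T^* = C_f$.

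First I would dispose of the three ``classical'' equivalences. The equivalence $(1) \Leftrightarrow (3)$ is Schauder's theorem ($T$ is compact iff $T^*$ is compact), and $(2) \Leftrightarrow (4)$ is Gantmacher's theorem ($T$ is weakly compact iff $T^*$ is weakly compact). For $(2) \Leftrightarrow (5)$ I would invoke the standard characterization of weak compactness through the adjoint: an operator $T \colon X \to Y$ is weakly compact if and only if $T^{**}(X^{**}) \subseteq Y$, if and only if $T^* \colon Y^* \to X^*$ is weak$^*$-to-weak continuous. Applied with $X = \F(M)$ and $Y = \F(N)$, this says exactly that $\widehat{f}$ is weakly compact iff $C_f$ is weak$^*$-to-weak continuous. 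Since $(1) \Rightarrow (2)$ is trivial, the entire theorem collapses to the single implication $(2) \Rightarrow (1)$.

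The crux is this implication $(2) \Rightarrow (1)$, and it is precisely where Theorem~\ref{thmC} enters. By Proposition~\ref{caracCompact}, $\widehat{f}$ is compact (resp.\ weakly compact) if and only if the set
$$\mathcal{W} = \left\{ \dfrac{\delta(f(x)) - \delta(f(y))}{d(x,y)} \; | \; x \neq y \in M \right\}$$
is relatively compact (resp.\ relatively weakly compact) in $\F(N)$. Assuming $\widehat{f}$ weakly compact, the set $\mathcal{W}$ is relatively weakly compact, and I note that $\mathcal{W} \subseteq \mathcal{FS}_2(N)$. Given any sequence $(m_n)_n$ in $\mathcal{W}$, the Eberlein--\v{S}mulian theorem furnishes a subsequence $(m_{n_k})_k$ converging weakly to some $\gamma \in \F(N)$. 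Since each $m_{n_k}$ lies in $\mathcal{FS}_2(N)$, Theorem~\ref{thmC} forces $\gamma \in \mathcal{FS}_2(N)$ and, decisively, $m_{n_k} \to \gamma$ in the norm topology. Hence every sequence in $\mathcal{W}$ admits a norm-convergent subsequence, so $\mathcal{W}$ is relatively norm compact (using completeness of $\F(N)$), and Proposition~\ref{caracCompact} then yields that $\widehat{f}$ is compact.

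The essential mathematical content is carried entirely by Theorem~\ref{thmC}: it is the automatic upgrade from weak to norm convergence on $\mathcal{FS}_2(N)$ that collapses the a priori weaker notion of weak compactness onto genuine compactness. Everything else is bookkeeping with classical theorems once the identifications $C_f = (\widehat{f}\,)^*$ and $\F(\cdot)^* = \Lip_0(\cdot)$ are in place, so within the proof of Theorem~\ref{thmB} itself there is no serious obstacle. The only points demanding a little care are that weak compactness may be tested sequentially (which is exactly the Eberlein--\v{S}mulian theorem) and that relative norm compactness of $\mathcal{W}$ in the Banach space $\F(N)$ is equivalent to every sequence in $\mathcal{W}$ having a norm-convergent subsequence.
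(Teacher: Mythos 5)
Your proof is correct and follows essentially the same route as the paper: the classical equivalences via Schauder, Gantmacher, and the Gantmacher--Nakamura characterization of weak compactness through weak$^*$-to-weak continuity of the adjoint, with the core implication $(2)\Rightarrow(1)$ obtained by combining Proposition~\ref{caracCompact}, the inclusion of the test set in $\mathcal{FS}_2(N)$, the Eberlein--\v{S}mulian theorem, and the weak-to-norm upgrade of Theorem~\ref{thmC}. The only cosmetic difference is that the paper cites the Gantmacher--Nakamura theorem directly for $(2)\Leftrightarrow(5)$ where you pass through the equivalent $T^{**}(X^{**})\subseteq Y$ formulation, which amounts to the same thing.
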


\begin{proof}
The implication $(1) \implies (2)$ is obvious.  Next, $(2) \implies (1)$ follows from Theorem~\ref{thmC} and Proposition~\ref{caracCompact}. Indeed, thanks to the Eberlein--\v{S}mulian theorem (see \cite[Theorem~1.6.3]{TopicsBanachSpaceTheory} e.g.), a subset $S$ of a Banach space $X$ is (relatively) weakly compact if and only if it is (relatively) weakly sequentially compact. So, Theorem~\ref{thmC} implies that a subset $S \subset \mathcal{FS}_k(M)$ is weakly compact if and only if it is compact in the norm topology. Now observe that the set appearing in Proposition~\ref{caracCompact} is a subset of $\mathcal{FS}_2(M)$ so that compactness and weak compactness are indeed equivalent.  To conclude, $(1) \iff (3)$ follows from Schauder's theorem (see e.g. \cite[Theorem~3.4.15]{Megg}), $(2) \iff (4)$ follows from Gantmacher's theorem (see e.g. \cite[Theorem~3.5.13]{Megg}), and $(2) \iff (5)$ follows from a classical result \cite[Theorem~3.5.14]{Megg} due to Gantmacher in the separable case and Nakamura in the general case.
\end{proof}

Theorem~\ref{thmC} is essentially contained in the very deep result \cite[Theorem~5.2]{AlbiacKalton}, even if one really needs to use the weak closeness of $\mathcal{FS}_k(M)$  \cite[Lemma 2.10]{ACP20} in order to obtain the statement we give.
For the sake of completeness, we will take advantage of some recent developments in the study of Lipschitz-free spaces in order to provide a new direct proof of this result. First, we recall two useful facts.
The first one shows that the pointwise multiplication with a Lipschitz function of bounded support always results in a Lipschitz function and, in fact, defines a continuous operator between Lipschitz spaces.

\begin{lemma}[Lemma~2.3 in \cite{APPP2019}]
\label{lm:multiplication_operator}
Let $M$ be a pointed metric space and let $h\in\Lip(M)$ have bounded support. Let $K\subset M$ contain the base point and the support of $h$. For $f\in\Lip_0(K)$, let $T_h(f)$ be the function given by
\begin{equation}
\label{eq:T_h}
T_h(f)(x)=\begin{cases}
f(x)h(x) & \text{if } x\in K \\
0 & \text{if } x\notin K
\end{cases} \,.
\end{equation}
Then $T_h$ defines a  weak$^*$-to-weak$^*$ continuous linear operator from $\Lip_0(K)$ into $\Lip_0(M)$, and $\norm{T_h}\leq\norm{h}_\infty+\rad(\supp(h))\lipnorm{h}$.
\end{lemma}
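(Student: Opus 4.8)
The plan is to verify, in order, that $T_h(f)$ is a well-defined member of $\Lip_0(M)$ for every $f\in\Lip_0(K)$, that $f\mapsto T_h(f)$ is linear and bounded with the announced norm estimate, and finally that $T_h$ is weak$^*$-to-weak$^*$ continuous. I set $R:=\rad(\supp(h))$, so that $\supp(h)\subseteq B(0_M,R)$ and, crucially, $h(x)=0$ for every $x\notin\supp(h)$; in particular $h$ vanishes off $K$. Linearity of $T_h$ is immediate from the pointwise formula \eqref{eq:T_h}, and $T_h(f)(0_M)=f(0_M)h(0_M)=0$ because $0_M\in K$ and $f\in\Lip_0(K)$.

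The core of the argument is the Lipschitz estimate
$$\abs{T_h(f)(x)-T_h(f)(y)}\leq\pare{\norm{h}_\infty+R\,\lipnorm{h}}\lipnorm{f}\,d(x,y)\qquad(x,y\in M),$$
which simultaneously shows $T_h(f)\in\Lip_0(M)$ and yields $\norm{T_h}\leq\norm{h}_\infty+R\,\lipnorm{h}$. First I would isolate the one observation that makes everything work: since $f(0_M)=0$, every point $z\in\supp(h)$ satisfies $\abs{f(z)}=\abs{f(z)-f(0_M)}\leq\lipnorm{f}\,d(z,0_M)\leq R\,\lipnorm{f}$. Thus the a priori unbounded factor $f$ is automatically controlled at exactly those points where it multiplies a nonzero value of $h$. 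Then I would run a Leibniz-type estimate, routing the bound on $f$ to the correct endpoint: when $x,y\in K$ and $x\in\supp(h)$, I use the splitting $f(x)h(x)-f(y)h(y)=h(y)(f(x)-f(y))+f(x)(h(x)-h(y))$ and bound the two summands by $\norm{h}_\infty\lipnorm{f}\,d(x,y)$ and $R\,\lipnorm{h}\lipnorm{f}\,d(x,y)$ respectively (using the symmetric splitting when instead $y\in\supp(h)$); if neither $x$ nor $y$ lies in $\supp(h)$ then $h(x)=h(y)=0$ and the difference vanishes. The mixed case $x\in K$, $y\notin K$ reduces to $\abs{f(x)h(x)}=\abs{f(x)}\,\abs{h(x)-h(y)}$ since $h(y)=0$, which is again at most $R\,\lipnorm{h}\lipnorm{f}\,d(x,y)$ once one notes that $h(x)\neq0$ forces $x\in\supp(h)$; and the case $x,y\notin K$ is trivial. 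I expect this bookkeeping to be the main, if elementary, obstacle: a naive product rule fails because $f$ may be unbounded, and the whole point is that the weight $h$ confines the dangerous factor to $B(0_M,R)$.

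For weak$^*$-to-weak$^*$ continuity I would use the identifications $\Lip_0(K)=\F(K)^*$ and $\Lip_0(M)=\F(M)^*$ together with the standard fact that a bounded operator $T:X^*\to Y^*$ is weak$^*$-to-weak$^*$ continuous if and only if it is an adjoint, for which it suffices that the transpose $T^*$ carry the canonical copy of $Y$ into that of $X$ (its preadjoint being then $T^*\restricted_Y$). Here $X=\F(K)$, $Y=\F(M)$ and $T=T_h$, so it is enough to show $T_h^*(\F(M))\subseteq\F(K)$. I would compute $T_h^*$ on the molecules: for $x\in M$ and $f\in\Lip_0(K)$,
$$\langle f,\,T_h^*\delta_M(x)\rangle=\langle T_h(f),\delta_M(x)\rangle=T_h(f)(x)=h(x)\,\langle f,\delta_K(x)\rangle,$$
so that $T_h^*\delta_M(x)=h(x)\,\delta_K(x)\in\F(K)$ when $x\in K$, and $T_h^*\delta_M(x)=0\in\F(K)$ when $x\notin K$ (recall $h(x)=0$ there). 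Hence $T_h^*$ sends the linear span of $\set{\delta_M(x):x\in M}$ into $\F(K)$; since this span is dense in $\F(M)$, the operator $T_h^*$ is norm continuous, and $\F(K)$ is closed in $\F(K)^{**}$, it follows that $T_h^*(\F(M))\subseteq\F(K)$. Setting $S:=T_h^*\restricted_{\F(M)}:\F(M)\to\F(K)$ gives $S^*=T_h$, so $T_h$ is the adjoint of a bounded operator and therefore weak$^*$-to-weak$^*$ continuous. This last part is routine once the computation on molecules is in place.
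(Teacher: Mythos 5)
Your proof is correct and complete; but note that the paper itself contains no proof of this lemma at all --- it is imported verbatim from \cite{APPP2019} (Lemma~2.3 there), so the comparison must be with the standard argument in that reference. Your norm estimate is the expected one: the Leibniz splitting $f(x)h(x)-f(y)h(y)=h(y)(f(x)-f(y))+f(x)(h(x)-h(y))$, the key observation that $\abs{f(z)}\leq\rad(\supp(h))\lipnorm{f}$ for $z\in\supp(h)$ (which is exactly what tames the possibly unbounded factor $f$), the symmetric splitting when it is $y$ rather than $x$ that lies in $\supp(h)$, and the mixed cases handled by $h$ vanishing off $K$ --- all the cases are accounted for, and the constant $\norm{h}_\infty+\rad(\supp(h))\lipnorm{h}$ comes out as stated. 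Where you genuinely diverge is the weak$^*$-continuity step: the usual route in this literature (and in \cite{APPP2019}) is to observe that on bounded subsets of a $\Lip_0$ space the weak$^*$ topology coincides with that of pointwise convergence, to check that $f\mapsto T_h(f)$ is pointwise continuous on bounded nets, and to conclude via the Banach--Dieudonn\'e/Krein--\v{S}mulian criterion; you instead build the preadjoint explicitly, computing $T_h^*\delta_M(x)=h(x)\delta_K(x)$ for $x\in K$ and $T_h^*\delta_M(x)=0$ for $x\notin K$, then using density of the span of the Diracs, boundedness of $T_h^*$, and norm-closedness of the canonical copy of $\F(K)$ in $\Lip_0(K)^*$ to get $T_h^*(\F(M))\subseteq\F(K)$, so that $T_h=\left(T_h^*\restricted_{\F(M)}\right)^*$ is an adjoint and hence weak$^*$-to-weak$^*$ continuous. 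Both routes are standard and rigorous; yours has the added benefit of exhibiting the preadjoint $W_h$ concretely as the weighting operator $\delta(x)\mapsto h(x)\delta(x)$ on molecules, which is precisely the operator the present paper uses right after the lemma in the proof of Theorem~\ref{thmC} (e.g.\ the claim that $W_h(\mu)=\mu$ for $\mu\in\F(B(x,\ep/2)\cup\{0\})$ becomes transparent, at least for finitely supported $\mu$ there, from your formula). Two points you leave implicit are harmless: finiteness of $\norm{h}_\infty$ (immediate since $h$ is Lipschitz with bounded support), and the duality fact that a bounded operator between dual spaces is weak$^*$-to-weak$^*$ continuous if it is an adjoint, which is classical.
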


The function $T_h(f)$ does not depend on the choice of $K$, as long as it contains the support of $h$. Thus the requirement that $0\in K$ is not really a restriction, as one may always use the set $K\cup\set{0}$ instead.
Since $T_h$ is weak$^*$-to-weak$^*$ continuous, there is an associated bounded linear operator $W_h\colon\lipfree{M}\rightarrow\lipfree{K}$ such that $\dual{W_h}=T_h$.
\smallskip

The second fact is the following, whose proof can be deduced from that of \cite[Lemma~4.5]{Kalton04}.

\begin{lemma} \label{lemmaKalton} 
Let $M$ be a bounded metric space. If $(\gamma_n)_n \subset \F(M)$ is a weakly null sequence such that 
$$ \exists \ep >0, \forall n \neq m , \quad d(\supp(\gamma_n) , \supp(\gamma_m))> \ep,$$
then $(\gamma_n)_n$ converges to 0 in the norm topology.
\end{lemma}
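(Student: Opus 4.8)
The plan is to argue by contradiction and to exploit the uniform separation of the supports in order to glue local norming functionals into a single Lipschitz function that the whole sequence fails to annihilate. Suppose $(\gamma_n)_n$ does not converge to $0$ in norm. Passing to a subsequence (weak nullity is inherited by subsequences), we may assume $\norm{\gamma_n}\geq\delta$ for some $\delta>0$ and all $n$. Since $\F(M)^*\equiv\Lip_0(M)$, for each $n$ we may pick $f_n\in\Lip_0(M)$ with $\Lip(f_n)\leq 1$ and $\<f_n,\gamma_n\>>\delta/2$. Writing $S_n:=\supp(\gamma_n)$ and $R:=\rad(M)$ (finite, as $M$ is bounded), note $\norm{f_n}_\infty\leq R$ because $f_n(0_M)=0$.

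Next I would localize each $f_n$ around its own support. Using $d(S_n,S_m)>\ep$ for $n\neq m$, the closed $(\ep/3)$-neighborhoods of the $S_n$ are pairwise $(\ep/3)$-separated. For each $n$ set $\phi_n(x):=\max\{0,\,1-(3/\ep)\,d(x,S_n)\}$, so that $0\leq\phi_n\leq 1$, $\phi_n=1$ on $S_n$, $\phi_n$ vanishes outside the $(\ep/3)$-neighborhood of $S_n$, and $\Lip(\phi_n)\leq 3/\ep$. Put $g_n:=f_n\phi_n$; the Leibniz estimate gives $\Lip(g_n)\leq\Lip(f_n)\norm{\phi_n}_\infty+\norm{f_n}_\infty\Lip(\phi_n)\leq 1+3R/\ep$, while $\norm{g_n}_\infty\leq R$, $g_n(0_M)=0$, and $g_n=f_n$ on $S_n$. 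Since the supports of the $g_n$ are pairwise $(\ep/3)$-separated, I would then define $f:=\sum_n g_n$ (a pointwise sum with at most one nonzero term at each point), check that $f\in\Lip_0(M)$, and observe that $g_n$ vanishes on $S_m$ for $n\neq m$ while $g_m=f_m$ on $S_m$. As $\gamma_m$ is supported on $S_m$ and the pairing with a supported element depends only on the restriction to that support, $\<f,\gamma_m\>=\<f_m,\gamma_m\>>\delta/2$ for every $m$. This contradicts $\gamma_m\to 0$ weakly, proving the lemma.

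The main obstacle is to verify that the glued function $f$ is Lipschitz with a constant independent of the number of pieces; this is exactly where boundedness of $M$ and the uniform separation $\ep$ enter. For $x,y$ in the same $\supp(g_n)$, one bounds $|f(x)-f(y)|$ by $\Lip(g_n)\,d(x,y)$; if $x$ lies outside every support, only the term attached to $y$ survives and the same estimate applies; and if $x\in\supp(g_n)$, $y\in\supp(g_m)$ with $n\neq m$, then $|f(x)-f(y)|\leq 2R$ while $d(x,y)\geq\ep/3$, giving ratio $6R/\ep$. Hence $\Lip(f)\leq\max\{1+3R/\ep,\,6R/\ep\}$, a bound uniform in $n$, which is precisely what makes $f$ an admissible element of $\F(M)^*$. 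Alternatively, the localization could be organized through the multiplication operators $W_{\phi_n}$ of Lemma~\ref{lm:multiplication_operator}, but the direct gluing above keeps all constants explicit.
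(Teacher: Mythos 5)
Your argument is correct, and it is genuinely more self-contained than what the paper itself offers: the paper never proves Lemma~\ref{lemmaKalton}, but merely records that it ``can be deduced from the proof of'' \cite[Lemma~4.5]{Kalton04}. Your proof replaces that external reference by a direct duality argument: after passing to a subsequence with $\norm{\gamma_n}\geq\delta$, you localize norming functionals $f_n$ by the bumps $\phi_n$ and glue them into a single $f=\sum_n g_n\in\Lip_0(M)$; the case analysis giving $\Lip(f)\leq\max\{1+3R/\ep,\,6R/\ep\}$ is complete (two points in the same piece, one point outside all pieces, two points in different pieces with $d(x,y)\geq\ep/3$), and the boundedness of $M$ enters exactly where it must, through $\norm{f_n}_\infty\leq R$. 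Testing the weakly null sequence against this single functional then produces the contradiction. This is the same localization philosophy as the paper's Lemma~\ref{lm:multiplication_operator} (your alternative formulation via $W_{\phi_n}$ makes that explicit) and, in spirit, as Kalton's own argument; what your version buys is a short proof with explicit constants that makes the paper self-contained at this point, at the cost of none of the generality needed here. Two steps deserve one extra line each. First, the identity $\<f,\gamma_m\>=\<f_m,\gamma_m\>$ rests on the standard fact that for $\gamma_m\in\F(K)$ with $K=\supp(\gamma_m)\cup\{0_M\}$ the pairing with $h\in\Lip_0(M)$ depends only on $h\restricted_K$, because the restriction map $\Lip_0(M)\to\Lip_0(K)$ is the adjoint of the canonical inclusion $\F(K)\hookrightarrow\F(M)$; since $f$ and $f_m$ agree on $S_m$ and both vanish at $0_M$, this applies, but it should be said rather than implied. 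Second, for $\K=\C$ one should choose $f_n$ with $|\<f_n,\gamma_n\>|>\delta/2$ and multiply by a unimodular scalar so that the pairing is real and positive; nothing else in the argument changes (and the degenerate case $\gamma_n=0$, where $S_n=\emptyset$, is automatically excluded by $\norm{\gamma_n}\geq\delta$).
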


We are now ready to prove the desired structural result about finitely supported sequences in Lipschitz-free spaces. 

\begin{maintheorem} \label{thmC} 
Let $M$ be a complete metric space. If a sequence $(\gamma_n)_n \subset \mathcal{FS}_k(M)$ weakly converges to some $\gamma \in \F(M)$, then $\gamma \in \mathcal{FS}_k(M)$ and $(\gamma_n)_n$ converges to $\gamma$ in the norm topology.
\end{maintheorem}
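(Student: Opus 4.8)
The plan is to reduce the statement to a weakly null sequence and then argue by induction on $k$. Since $\mathcal{FS}_k(M)$ is weakly closed (\cite[Lemma~2.10]{ACP20}), the weak limit $\gamma$ already belongs to $\mathcal{FS}_k(M)$; replacing $(\gamma_n)_n$ by $(\gamma_n-\gamma)_n$, which lies in $\mathcal{FS}_{2k}(M)$ and is weakly null, it suffices to prove that every weakly null sequence in $\mathcal{FS}_k(M)$ converges to $0$ in norm. I would set this up as an induction on $k$, the case $k=1$ being immediate: for $\gamma_n=a_n\delta(x_n)$ one tests against $f(y)=d(y,0_M)$ to get $|\langle f,\gamma_n\rangle|=|a_n|\,d(x_n,0_M)=\norm{\gamma_n}$, so weak nullity forces $\norm{\gamma_n}\to 0$. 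For the inductive step I assume, for contradiction, that $\norm{\gamma_n}\ge c>0$ and pass to a subsequence for which each $\gamma_n$ has exactly $k$ atoms $x_1(n),\dots,x_k(n)$ and all the quantities $d(x_i(n),x_j(n))$ and $d(x_i(n),0_M)$ converge in $[0,+\infty]$.

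The bookkeeping then aims to split $\gamma_n$ into weakly null pieces carrying strictly fewer atoms, so that the induction hypothesis applies. Declaring $i\sim j$ when $d(x_i(n),x_j(n))$ stays bounded partitions the atoms into clusters pairwise separated at scale $+\infty$; refining the bounded atoms by $i\approx j$ when $d(x_i(n),x_j(n))\to 0$ produces clusters of vanishing diameter that are separated by a fixed $\eta>0$. Whenever more than one cluster occurs, I separate them: for a cluster accumulating near a fixed $x^\ast\in M$ (this is where completeness is used, to place the accumulation point inside $M$) one applies the multiplication operator $W_h$ of Lemma~\ref{lm:multiplication_operator} attached to a fixed bump $h$ that equals $1$ near $x^\ast$ and is supported in a small ball, built via Lemma~\ref{LemmaConstrctionLipF}; being bounded, $W_h$ preserves weak nullity, and for large $n$ it extracts exactly the atoms of that cluster. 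This writes $\gamma_n$ as a sum of weakly null pieces in $\mathcal{FS}_{k-1}(M)$ and the induction closes. A cluster of vanishing diameter whose position fails to accumulate in $M$ is not totally bounded, so after extracting an $\eta$-separated subsequence its supports become mutually $\eta/3$-separated and Lemma~\ref{lemmaKalton} forces norm convergence to $0$.

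The main obstacle is the remaining irreducible situation in which all $k$ atoms form a single cluster, so that either (a) $\diam(\supp\gamma_n)\to 0$ and $\supp\gamma_n$ concentrates at a single point $x^\ast\in M$, or (b) the whole configuration escapes to infinity. Here fixed multiplication operators cannot split the cluster and Lemma~\ref{lemmaKalton} does not apply, since the supports overlap. To treat (a) I would first test against a fixed bump equal to $1$ near $x^\ast$ (legitimate when $x^\ast\neq 0_M$; when $x^\ast=0_M$ the next reduction is automatic since $d(x_i(n),0_M)\to 0$) to obtain $\sum_i a_i(n)\to 0$, and then subtract one reference atom to pass to the mean-zero element $\sum_{i\ge 2}a_i(n)\big(\delta(x_i(n))-\delta(x_1(n))\big)$, still weakly null and of norm $\ge c/2$. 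Passing to a lacunary subsequence along which $\rho_n=\diam(\supp\gamma_n)$ decreases geometrically, I would construct a single detecting function $f=\sum_j c_j\varphi_j$ as a sum of bumps $\varphi_j$ adapted to the scale $\rho_{n_j}$ and centered near $x^\ast$: coarser bumps are almost constant on the much smaller later configurations, hence contribute negligibly thanks to the mean-zero property, while finer bumps vanish on the earlier ones, so that $\langle f,\gamma_{n_j}\rangle$ is, up to a controlled error, a fixed positive multiple of $\norm{\gamma_{n_j}}$, contradicting weak nullity. Case (b) is handled by an analogous gliding-hump construction placed at well-separated locations far from $0_M$, after a cutoff reduction via the operators $W_{h}$ to dispose of the bounded part. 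I expect this scale-and-space-lacunary construction, and in particular the normalization making $f$ Lipschitz while keeping the diagonal term dominant, to be the delicate step; the rest is a careful but routine induction.
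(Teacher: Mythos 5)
Your overall architecture closely tracks the paper's proof: the reduction to weakly null sequences via the weak closedness of $\FS_k(M)$ \cite[Lemma 2.10]{ACP20}, the induction on $k$ with base case tested against $d(\cdot,0_M)$, the splitting of clusters by the multiplication operators $W_h$ of Lemma~\ref{lm:multiplication_operator}, and a Kalton-type separation argument for non-accumulating configurations. But there are two problems. The lesser one: Lemma~\ref{lemmaKalton} is stated only for \emph{bounded} $M$, and you never reduce to that case. The paper does this at the outset by invoking \cite[Theorem~A]{AACD20}, which gives a bounded space $B(M)$ with $\F(M)\simeq\F(B(M))$ via an isomorphism preserving the cardinality of supports; after that, every use of Lemma~\ref{lemmaKalton} is legitimate and the entire ``escape to infinity'' scenario of your case (b) disappears. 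As written, your applications of Lemma~\ref{lemmaKalton} to escaping clusters are outside its hypotheses, and your gliding hump ``at well-separated locations far from $0_M$'' is left entirely to the reader.

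The genuine gap is your case (a), the single cluster concentrating at $x^\ast$. The paper disposes of it by observing that everything lives in a countable compact set $K$ and quoting the Schur property of $\F(K)$ \cite[Theorem~3.1]{HLP}; your hand-rolled lacunary hump is meant to replace this, and its two key claims fail. First, ``coarser bumps are almost constant on the much smaller later configurations, hence contribute negligibly thanks to the mean-zero property'' is false: the oscillation bound gives $|\langle g,\mu\rangle|\le \mathrm{Lip}(g)\,\rho\sum_i|a_i|$ for a mean-zero $\mu=\sum_i a_i\delta(x_i)$ supported in a set of diameter $\rho$, and $\rho\sum_i|a_i|$ is \emph{not} controlled by $\norm{\mu}$. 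For instance in $\R$, $\mu_n=n^2\bigl(\delta(t_n)-\delta(t_n+n^{-2})\bigr)$ has norm $1$ and support of diameter $n^{-2}$, yet pairs to size $\approx 1$ with suitable $1$-Lipschitz functions. (The repairable version chooses the subsequence inductively, using weak nullity itself to make $|\langle\varphi_i,\gamma_{n_{j+1}}\rangle|$ small for $i\le j$; your sketch fixes the lacunary subsequence first and then relies on the false estimate.) Second, the Lipschitz-ness of $f=\sum_j c_j\varphi_j$ is not a matter of ``normalization'': all your bumps are nested at the common center $x^\ast$, and with uniformly positive Lipschitz constants (needed for detection) and non-decaying weights $c_j$ (needed to contradict weak nullity), such a sum is in general only $d\log(1/d)$-continuous. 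Confining $\varphi_j$ to an annulus around $x^\ast$ disjoint from the others costs a cutoff of Lipschitz constant of order $b_j/a_j$ whenever a single $\gamma_{n_j}$ has atoms spread across widely separated scales (say at distances $b_j$ and $b_j^{100}$ from $x^\ast$), so one is driven to dyadic-annular decompositions of each $\gamma_n$ --- which is precisely the Kalton/HLP machinery that the paper cites rather than reproves. So the step you flag as ``delicate'' is in fact the crux, and as sketched it does not close; the realistic fixes are to quote the Schur property of free spaces over countable compacta, as the paper does, or to import the annular decomposition arguments wholesale.
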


\begin{proof}
Since $\mathcal{FS}_k(M)$ is weakly closed by \cite[Lemma~2.10]{ACP20},  if a sequence $(\gamma_n)_n \subset \mathcal{FS}_k(M)$ weakly converges to some $\gamma \in \F(M)$, then $\gamma \in \mathcal{FS}_k(M)$. Therefore, for every $n \in \N$, $\gamma - \gamma_n \in \mathcal{FS}_{2k}(M)$. Consequently, to prove the result it is enough to show that for every complete metric space $M$ and for every $k \in \N$, any weakly null sequence in $\FS_k(M)$ is actually norm null. We will proceed by induction on $k \in \N$. Thanks to \cite[Theorem~A]{AACD20}, there exists a bounded metric space $B(M)$ such that $\F(M)$ is linearly isomorphic to $\F(B(M))$. Moreover the isomorphism $T : \F(M) \to \F(B(M))$ preserves finitely supported elements is the sense that $\gamma \in \FS_k(M)$ if and only if $T(\gamma) \in \FS_k(B(M))$. So, without loss of generality, we may assume that $M$ is a bounded metric space. 

If $k=1$ and $(\gamma_n)_n \subset \mathcal{FS}_1(M)$ is a weakly null sequence, we can write $\gamma_n = a_n \delta(x_n)$ where $a_n \in \K$ and $x_n \in M$. Let us denote $f := d(\cdot,0) \in \Lip_0(M)$. Since $(\gamma_n)_n$ is weakly null, it is readily seen that 
$$\|\gamma_n\| = |a_n| d(x_n,0)  = |\langle f , \gamma_n \rangle | \underset{n \to \infty}{\longrightarrow} 0.$$

Let us fix $k \in \N$. Assume we have shown that, for every $j \leq k$, every weakly null sequence in $\mathcal{FS}_j(M)$ is in fact norm null. Let us consider a weakly null sequence $(\gamma_n)_n \subset \mathcal{FS}_{k+1}(M)$. For every $n \in \N$, we will write 
$$\gamma_n = \sum_{i=1}^{k+1} a_i(n) \delta(x_i(n)),$$
where $a_i(n) \in \K$ and $x_i(n) \in M$ for every $1 \leq i \leq k+1$. We will distinguish two cases: 
\begin{itemize}[leftmargin=0pt, itemsep=4pt]
    \item There exists $i \in \{1, \ldots , k+1\}$ such that $(x_i(n))_n$ has a convergent subsequence to some $x \in M$. For simplicity, we still denote the subsequence by $(x_i(n))_n$. Notice that $i \in \{1, \ldots , k+1\}$ as above might not be unique. So, up to a further extraction, we may assume that there exists $\ep >0$ and $i_1, \ldots , i_j$ such that $(x_{i}(n))_n$ converges to $x$ for every $i \in I:=\{i_1, \ldots , i_j\}$, while $(x_{i}(n))_n \subset M \setminus B(x,\ep)$ whenever $i \in \{1 , \ldots, k+1\} \setminus I$.
    
    If $j=k+1$, that is $I = \{1 , \ldots, k+1\}$, then the set $K:=\{x_i(n) \; | \; n \in \N \text{ and } 1 \leq i \leq k+1 \} \cup \{x\} \cup \{0\}$ is a countable compact metric space such that $(\gamma_n)_n \subset \F(K)$. Thanks to \cite[Theorem~3.1]{HLP} (see also \cite{AGPP21}), $\F(K)$ has the Schur property so that $(\gamma_n)$ is actually norm null, which is what we wanted to prove. 
    
    If $j<k+1$, we let $h$ be the map defined by $h(z)=1$ if $z \in B(x, \ep/2)$ and $h(z) = 0$ if $z \in M \setminus B(x, \ep)$. It is easy to prove that $h$ is Lipschitz on $B(x,\ep/2) \cup M \setminus B(x, \ep)$ and using McShane's extension theorem (see e.g. \cite[Theorem~1.33 and Corollary 1.34]{Weaver2}), we can extend $h$ to the all $M$. Clearly, $\supp(h) \subset K:= B(x,\ep)\cup \{0\}$. Now let $T_h$ be as in Lemma~\ref{lm:multiplication_operator} and $W_h : \F(M) \to \F(K)$ be its pre-adjoint operator. It is a routine check to see that if $\mu \in \F(B(x,\ep/2) \cup \{0\} )$ then
    $W_h(\mu) = \mu$. Furthermore, there exists $N_0 \in \N$ such that for every $n \geq N_0$ and every $i \in \{i_1, \ldots , i_j\}$, $(x_{i}(n))_n \subset B(x,\ep/2)$. Thus, by construction, we have:
    $$ \forall n \geq N_0, \quad W_h\gamma_n = \underset{i \in I}{\sum_{i=1}^{k+1}} a_{i}(n) \delta(x_{i}(n)). $$
    Since $W_h$ is continuous and since $(\gamma_n)_n$ is weakly null, the sequence $(W_h\gamma_n)_n \subset \F(K)$ is weakly null as well.
    As $j<k+1$, we may use the induction hypothesis to deduce that $(W_h\gamma_n)_n$ is norm null in $\F(K)$. Recall that $\F(K)$ is a closed subspace of $\F(M)$ so that $(W_h\gamma_n)_n$ can be seen as a norm null sequence in $\F(M)$, which in turn implies that the sequence $(\mu_n)_n$ given by 
    $$ \mu_n :=  \underset{i \not\in I}{\sum_{i=1}^{k+1}} a_i(n) \delta(x_i(n)) $$
    has to be weakly null. So we use once more our induction hypothesis to get that $(\mu_n)_n$ is norm null and finally
    $$(\gamma_n)_n = (W_h\gamma_n)_n + (\mu_n)_n$$
    is norm convergent to 0 as the sum of two such sequences.

    \item There is no $i \in \{1, \ldots , k+1\}$ such that $(x_i(n))_n$ has a convergent subsequence. Then each set $\{x_i(n) \; | \; n \in \N \}$, $1 \leq i \leq k+1$, is not totally bounded. Hence there exists $\ep >0$ and an infinite subset $\mathbb{M}$ of $\N$ such that for every $i$ and every $n \neq m \in \mathbb{M}$: $d(x_i(n) , x_i(m))> \ep$. We now claim that we can extract an infinite subset $\mathbb{M}_1$ of $\mathbb{M}$ such that for every $i\neq j$ and every $n \neq m \in \mathbb{M}_1$: $d(x_i(n) , x_j(m))> \ep/2$. Let us briefly sketch this extraction. We write $\M= \{n_1 , n_2 , \ldots\}$ and we let $m_1:=n_1$. Since the sequences $(x_i(n_\ell))_\ell$, $1\leq i \leq k+1$, are $\ep$-separated, by the triangle inequality they must ``escape" the balls $B(x_j(m_1), \ep/2)$, $1\leq j \leq k+1$, eventually.  In other words, there exists $m_2 \in \M$ such that $m_1<m_2$ and for every $n \in \M$ and $1 \leq i,j \leq k+1$, $n \geq m_2 \implies d(x_i(n) , x_j(m_1)) \geq \ep/2$. By the same argument, there exists $m_3 \in \M$ such that $m_3 > m_2$ and for every $n \in \M$, $n \geq m_3 \implies d(x_i(n) , x_j(m_1)) \geq \ep/2$ and $d(x_i(n) , x_i(m_2)) \geq \ep/2$. Continuing this construction by induction provides the required $\M_1 = \{m_1, m_2 , \ldots\}$. To conclude, notice that for every $n \neq m \in \mathbb{M}_1$, $d(\supp(\gamma_n) , \supp(\gamma_m)) > \ep/2$. Since $(\gamma_n)_{n \in \M_1}$ is weakly null, we may apply Lemma~\ref{lemmaKalton} to conclude that $(\gamma_n)_{n \in \M_1}$ is norm null.
\end{itemize}

\end{proof}

\end{document}